\newtheorem{theorem}{Theorem}[section]
\newtheorem{proposition}[theorem]{Proposition}
\newtheorem{lemma}[theorem]{Lemma}
\theoremstyle{definition}
\newtheorem{definition}[theorem]{Definition}
\numberwithin{equation}{section}
\renewcommand{\epsilon}{\varepsilon}
\renewcommand{\rho}{\varrho}
\DeclareMathOperator{\HC}{HC} 
\begin{document}
\centerline{\bf Linear bounds for constants in Gromov's systolic inequality}
\centerline{\bf and related results}

%\centerline{\bf Appendix}

%\medskip\noindent
\vskip 0.5truecm
\centerline{\bf Alexander Nabutovsky}
%\institute{Department of Mathematics, University of Toronto}
%\maketitle
%
\vskip 0.5truecm

{\bf Abstract.} Gromov's systolic inequality asserts that the length, $sys_1(M^n)$, of the shortest non-contractible curve in a
closed essential Riemannian manifold $M^n$ does not exceed $c(n)vol^{1\over n} (M^n)$ for some constant $c(n)$. (Essential manifolds is a class of non-simply connected
manifolds that includes all non-simply connected closed surfaces, tori, and projective spaces.)

Here we prove that all closed essential Riemannian manifolds satisfy $sys_1(M^n)\leq n\ vol^{1\over n}(M^n)$. (The best previously known upper bound for $c(n)$ was exponential in $n$.) 

We similarly improve a number of related inequalities. The paper also contains a qualitative
strengthening of Guth's theorem from [Gu11], [Gu17] asserting that if volumes of all metric balls of radius $r$ in a closed Riemannian manifold
$M^n$  do not exceed $({r\over c(n)})^n$, then the $(n-1)$-dimensional Urysohn width of the manifold does not exceed $r$. In our version the assumption of Guth's theorem
is relaxed to the assumption that for each $x\in M^n$ there exists $\rho(x)\in (0,r]$ such that the volume of the metric ball $B(x,\rho(x))$
does not exceed $({\rho(x)\over c(n)})^n$, where one can take $c(n)={n\over 2}$.

\vskip 1.5truecm

\section{Introduction.}

Let $M^n$ be a closed Riemannian manifold. Larry Guth ([Gu 17]) proved that there exists $c(n)$
with the following property: if for some $r>0$ the volume of each metric ball of radius $r$ is less than 
$({r\over c(n)})^n$, then there exists a continuous map from $M^n$ to a $(n-1)$-dimensional simplicial complex such that the inverse image of each point can be covered by a metric ball of radius $r$ in $M^n$. It was previously proven by Gromov that this result implies
two by now famous Gromov's inequalities: $Fill Rad(M^n)\leq c(n)vol(M^n)^{1\over n}$ (Theorem 1.2.A in [Gr]) and, if $M^n$ is essential, then also $sys_1(M^n)\leq 6c(n)vol(M^n)^{1\over n}$ (Theorem 0.1.A in [Gr]) with the same constant $c(n)$.

Here $sys_1(M^n)$ denotes the length of a shortest
non-contractible closed curve in $M^n$. 

Here we prove that these results hold with $c(n)=({n!\over 2})^{1\over n}\leq {n\over 2}$. We demonstrate that
for essential Riemannian manifolds $sys_1(M^n) \leq n\ vol^{1\over n}(M^n)$. %={n\over e}(1+o(1))$. 
All previously known upper bounds for $c(n)$ were exponential in $n$.

Moreover, we present a qualitative improvement: In Guth's theorem the assumption that the volume
of every metric ball of radius $r$ is less than $({r\over c(n)})^n$ can be replaced by a weaker assumption that for every point
$x\in M^n$ there exists a positive $\rho(x)\leq r$ such that the volume of the metric ball of radius $\rho(x)$ centered at $x$ is less than $({\rho(x)\over c(n)})^n$
(for $c(n)=({n!\over 2})^{1\over n}$). 

Also, if $X$ is a boundedly compact metric space such that for some $r>0$ and an integer $n\geq 1$  
the $n$-dimensional Hausdorff content of each metric ball of radius $r$ in $X$
is less than $({r\over 4n})^n$, then there exists a continuous map from $X$ to a $(n-1)$-dimensional simplicial complex such that the inverse image of each point can be covered by a metric ball of radius $r$. This provides a (significant) quantitative improvement of a result
from [LLNR] and [P]. (Recall that a metric space is called {\it boundedly compact} if all closed metric balls in this space are compact.)

Most other papers in systolic geometry follow Gromov's approach based on the isoperimetric inequality in Banach spaces proven using
``cutting off of thin fingers". We follow Schoen-Yau style approach , i.e. the inductive dimension reduction. This approach was introduced to systolic geometry by Guth ([Gu10]) and later greatly improved and strengthened by Papasoglu ([P]) who used some ideas from [LLNR].
Our approach in the present paper essentially follows [P], yet we provide a number of modifications, strengthenings and simplifications of proofs there.

Our paper is almost self-contained. Without a proof we use only two (well-known) facts that go beyond material taught in standard graduate courses, namely,  the existence of smooth approximations of the distance function (cf. [Ga]) and the coarea formula (cf. [BZ], Theorem 3.2.4).

%Our very short and almost self-contained proofs  use ideas from recent paper [P] by Panos Papasoglu.

%\vskip 1.5truecm
%\par\noindent
%{\bf 1. Introduction.} 

\section{Results.}
%
%Recall that the $m$-dimensional Hausdorff content of a compact subset $C$ of a metric space $X$ is defined as the infimum over all coverings of $C$ by closed metric balls in $X$ of $\sum_i r^i$, where $r_i$ are the radii of all balls of the covering. The Urysohn $k$-width is, by definition, the infimum
%of $\sup_{x\in K^k}diam(f^{-1}(x))$, where the infimum is taken over the set of all continuous maps $f$ of $X$ to all
%$k$-dimensional simplicial complexes $K$. Intuitively, it is a quantitative measure of how close $X$ is to
%being $k$-dimensional. 
\subsection{Definitions and historical context.}
Given a bounded metric space $X$ its Kuratowski embedding into $L^{\infty}(X)$ sends each point $x$ to the distance function to $x$.
%If $f_*(H_k(X^n;G))\not= \{0\}$ for some $k\geq m$,
%then $X^n$ is called $m$-essential. 
Gromov defined the {\it filling radius} of a closed Riemannian manifold $M^n$, $Fill Rad (M^n)$ as the infimum of $r$ such that the image of $M^n$ in $L^{\infty}(M^n)$ under the Kuratowski embedding bounds
in its $r$-neighbourhood ([Gr], section 1). In [Gr] Gromov gave a proof of the inequality $Fill Rad(M^n)\leq c(n)vol(M^n)^{1\over n}$
with the constant that behaves as $(Cn)^{3n\over 2}$. (On the other hand Misha Katz's paper [K] contains a short proof the inequality $FillRad(M^n)\leq {1\over 3}diam(M^n)$ with the optimal constant.)
Gromov's proof was later somewhat simplified by Stefan Wenger ([W]). (More precisely, Wenger simplified the proof of Gromov's filling volume inequality which is the main ingredient of the proof of inequality $Fill Rad(M^n)\leq c(n)vol(M^n)^{1\over n}$ in [Gr].)

An $n$-dimensional simplicial complex $X^n$ is {\it essential}, if there is no map $f:X^n\longrightarrow K(\pi_1(X^n),1)$ such that $f$ induces the isomorphism of the fundamental groups,
and the image of $f$ is contained in the $(n-1)$-skeleton of $K(\pi_1(X^n),1)$. It is easy to see that $X^n$ is essential if the classifying map
$X^n\longrightarrow K(\pi_1(X^n),1)$ 
induces the homomorphism of $n$th homology groups with non-trivial image (for some group of coefficients). 

The paper [Gr] contains a short
and elegant proof of the inequality $sys_1(M^n)\leq 6 Fill Rad(M^n)$ for all essential closed Riemannian
manifolds (Lemma 1.2.B in [Gr]). Combining this inequality with $Fill Rad(M^n)\leq c(n)vol^{1\over n}(M^n)$ Gromov proves that
all closed essential Riemannian manifolds satisfy $sys_1(M^n)\leq c(n)vol^{1\over n}(M^n)$.
This inequality generalizes earlier results by Loewner, Pu, Accola, Blatter, Hebda, and Yu. Burago and V. Zalgaller 
for surfaces. In partucular, Yu. Burago and V. Zalgaller and, independently, Hebda proved that for all closed Riemannian surfaces $\Sigma$, $sys_1(\Sigma)\leq\sqrt{2}\sqrt{\rm{Area}(\Sigma)}$ (cf. [BZ]).

One can define the $(n-1)$-dimensional {\it Urysohn width}, $UW_{n-1}(X)$,  of a metric space $X$ as infimum of $t$ such that there exists a continuous map
$f:X \longrightarrow K^{n-1}$ to a $(n-1)$-dimensional polyhedron $K^{n-1}$ such that for each $k\in K^{n-1}$
$f^{-1}(k)$ has diameter $\leq t$. We will need also a closely related notion of $(n-1)$-dimensional {\it Alexandrov width}, $UR_{n-1}(X)$, that is
defined almost as the Urysohn width, but with condition $diam(f^{-1}(k))\leq t$ replaced by
the condition that $f^{-1}(x)$ is contained in a metric ball of radius $t$. It is obvious that $UR_{n-1}\leq UW_{n-1}(X)\leq 2UR_{n-1}(X)$.
The $(n-1)$-dimensional Alexandrov width of a compact metric space $X$ can also be defined as the infimum of $t$ such that there exists a covering of $X$ by connected open sets $U_\alpha$ of radius
$\leq t$ such that no $n+1$ sets $U_\alpha$ have a non-empty intersection. (The equivalence of these two definitions is well-known.
For the sake of completeness, we include a sketch of a proof of the equivalence in section 1.3 below.)
%To see that these two definitions are equivalent for all compact $X$, denote $UR_{n-1}(X)$ in the sense of the first definition by $\rho$, and the second by $r$.
%We will first demonstrate that $r\leq\rho+\epsilon$ for
%an arbitrarily small positive $\epsilon$, and then demonstrate that $\rho\leq r$. To prove the first
%inequality choose $K^{n-1}$ and $f$ such that $\max_{k\in K^{n-1}}diam(f^{-1}(k))$ is very close to $\rho$. 
%Now consider a very fine covering of $K^{n-1}$ 
%by open sets $V_\beta$ such that each $(n+1)$-tuple of these sets has the empty intersection.
%Finally, define the collection $U_\alpha$ as the collection of all connected components of open sets $f^{-1}(V_\beta)$. Note that 
%the diameters of $U_\alpha$ do not exceed $\max_{k\in K^{n-1}}diam(f^{-1}(k))+\epsilon$, where $\epsilon>0$ can be made arbitrarily small
%by choosing the covering $V_\beta$ sufficiently fine.
%To see that $\rho\leq r$, consider the nerve $K^{n-1}$ of the covering $U_\alpha$,
%and the standard map $f:X\longrightarrow K^{n-1}$ defined using a partition of unity subordinate to the covering
%$\{U_\alpha\}$. Now note that the inverse images of each point of $K^{n-1}$ under $f$ 
%will be contained in one of the sets $U_\alpha$. 

Gromov  also
provided a proof of the inequalities $Fill Rad(M^n)\leq {1\over 2}UW_{n-1}(M^n)$ (the combination
of Proposition (D) in Appendix 1 in [Gr] with the inequality in the example at the end of section (B) in Appendix 1 of [Gr]).
Therefore, any upper bound for $UW_{n-1}$ automatically leads to upper bounds to 
$Fill Rad$ and, in the essential case, for $sys_1$. Now a natural question
(posed by Gromov in [Gr]) is whether or not
$UW_{n-1}(M^n)\leq c(n)vol(M^n)^{1\over n}$. This question was solved in
the affirmative by Larry Guth in [Gu11], [Gu17]. 
In fact, Guth proved more. He demonstrated that there exists $\delta(n)$ such that if for some $r>0$ all metric balls of radius $r$ have volume less than $\delta(n)r^n$, then $UW_{n-1}(M^n)\leq r$. To recover the previous inequality one can take here
$r={vol(M^n)^{1\over n}\over \delta(n)^{1\over n}}$. The assumption will automatically hold, and one sees that
$UW_{n-1}(M^n)\leq \delta(n)^{-{1\over n}}vol(M^n)^{1\over n}$.

Recall that the $m$-dimensional Hausdorff content of a compact metric space $X$ is the infimum over all coverings of $X$ by metric
balls with radii $r_i$ of the sum $\sum_i r_i^m$. It is denoted by $HC_m(X)$. (If one requires here that all $r_i$ do not exceed $\delta$, and then takes the limit as
$\delta\longrightarrow 0$, one obtains the $m$-dimensional Hausdorff measure of $X$.)
Guth asked if one can replace the volume in these inequalities by the $n$-dimensional Hausdorff content, and if such estimates
will be true for all (not necessarily $n$-dimensional) compact metric spaces (Questions 5.1 and 5.2 in [Gu 17]).

In [LLNR] we proved that this is, indeed, so. %Namely, we proved
%that for each positive integer $m$ and each boundedly compact metric space $X$ 
%there exists $\epsilon_m>0$ such that if for some $r$ every metric ball in $X$ has $m$-dimensional Hausdorff content $\leq\epsilon_mr^m$, then the $(m-1)$ dimensional Urysohn width of $X$ does not exceed $r$ (Theorem 1.1 in [LLNR]).
%If $X$ is compact, then choosing $r={HC_m(X)^{1\over m}\over\epsilon_m^{1\over m}}$, we immediately see that
%$UW_{m-1}(X)\leq c(m)\HC_m(X)^{1\over m}$, where $c(m)={1\over\epsilon_m^{1\over m}}$ (Theorem 1.2 in [LLNR]).
For example, we proved that for each compact metric space $X$ and each integer $m>1$, $UW_{m-1}(X)\leq C(m)HC_m^{1\over m}(Y)$.
As a corollary, we immediately see that
if $X$ is a compact $m$-essential smooth polyhedron 
endowed with the structure of the length space, $sys_1(X)\leq 3C(m)\HC^{1\over m}(X)$.
%
%Several months after the first draft of our paper [LLNR] was posted online, Panos Papasoglu devised a much simpler proof of our Theorem 1.1 ([P]).
%Moreover, his proof yields a better constant in Theorem 1.1 (and as a corollary, in Theorems 1.2 and 1.4). While he did not provide
%an estimate of the dimensional constants, his proof
%yields constants of the form $const^{-m^2/2}$ in Theorem 1.1, and  $const^m$ in Theorems 1.2 and 1.4.
%Our proof in [LLNR] yields, respectively,  $(const\ m)^{-m^2}$ and $(const\ m)^m$. Note, that the best previously known upper
%bounds for constants in Gromov's filling radius and systolic inequalities also
%behave as $(Cn)^n$ ([W]).
Recently, Panos Papasoglu wrote a paper [P] with a much shorter proof of these results than the proof in [LLNR]. His proof did not contain an estimate for $C(m)$,
but our analysis of his proof yields $C(m) \sim const^m$ leading to exponential in $m$ estimates for
constants in the previous inequalities. He learned about [LLNR] from my talk and our conversations at the conference at Barcelona.
%His
%proof draws on several ideas of [LLNR].
While his proof draws on
several ideas of [LLNR], it also contains
%In particular, he uses Eilenberg's inequality for Hausdorff contents discovered in [LLNR], a trick used
%in [LLNR] to counter the non-additivity of the Hausdorff content and an argument based on the Tietze extension to prove a weaker version of Lemma 2.5 below.
a central observation that is quite different from the ideas of [LLNR]. Roughly speaking, the amazing in its strength and simplicity Papasoglu's insight was to consider an (almost) minimal ``hypersurface" dividing a compact metric space into subsets of a small diameter and to observe that the ``area" (or, more precisely, the appropriate Hausdorff content) of the intersection of this hypersurface
with any metric ball cannot exceed the Hausdorff content of the corresponding metric sphere. Indeed, otherwise, one could just replace
the part of the minimal hypersurface inside the metric ball by the metric sphere preserving the same upper bound for diameter for each component of the complement. Thus, the (almost) minimal hypersurface inherits the main property of the metric space, namely, that its intersections with metric balls of a certain size are ``small".
This observation enables one to run an induction argument, where the result for the metric space and the $n$-dimensional
Hausdorff content would follow from the same result for the minimal hypersurface and its $(n-1)$-dimensional Hausdorff content.

Of course, this approach is strongly reminiscent of
of the Schoen - Yau approach to scalar curvature that was introduced to systolic geometry by Guth, who in [Gu10] proved
that $sys_1(M^n)\leq 8nvol(M^n)^{1\over n}$ for Riemannian tori.  
The proof of our main theorem below is heavily based on Papasoglu's idea. Yet it
contains a number of modifications and simplifications:

First, we observed that the dependence of the constant in the inequalities could be improved from exponential to linear by (a) carefully choosing the radius
of the metric ball (in the argument above) and (b) improving an argument of the end of the proof of Lemma 2.4 in [P] so as not to decrease the constant
by a constant factor on each step - compare our Lemma 2.5, where the upper bound in the assumption and the conclusion is the same. (We observed that it is more convenient to use $UR$ instead of the previously used $UW$ here.) In order to accomplish (a), we could have used the trick used by Larry Guth at the end of
[Gu10] (as it was done in the first version of the present paper). 

However, we noticed that there is a better (point dependent) way to choose the radii as in Lemma 2.4 below. Not only this observation leads to an improvement of the estimate by a constant factor, but it also yields a quantitative improvement of all the previous results that was mentioned in the abstract: the radius
of a small ball centered at a point is allowed to depend on the center as long as it does not exceed a fixed $r$.
It is interesting to note that we do not see how to achieve this quantitative
improvement, if one follows the approach of [P] via Hausdorff contents, as in this approach  one needs to restrict the radii of the considered balls
by a quantity that depends on the radii of small balls $r$ and becomes wildly variable, if these radii are allowed to depend on the centers. (See the remark
at the end of section 3 for more details.)

Third, the approach of Papasoglu to the classical systolic geometry was through results about Hausdorff contents (the same as in [LLNR]). He mentioned that instead one can directly use the Hausdorff measure and Eilenberg's inequality. We adopted this
approach and discovered that not only it leads to a much simpler proof, but also one can save an extra $\sqrt{n}$ factor in comparison with first establishing the inequality for Hausdorff content with linear constant (Theorem 1.4), and then using the obvious inequality
relating Hausdorff measure and Hausdorff content.
%as the coarea inequality for Riemannian polyhedra is stronger than Eilenberg's inequality. 

Fourth, we were careful about the values of the numerical 
constants in our proof. This is, probably, not that important in the long run, as one expects that the optimal
dimensional constants in the above inequalities should behave as $const\ \sqrt{n}$ 
and not as $const\ n$. Still, as the result, we derive
aesthetically pleasing and convenient to use inequalities $sys_1(M^n)\leq n\ vol^{1\over n}(M^n)$ for all closed
essential manifolds, and $Fill Rad(M^n)\leq {n\over 2}vol^{1\over n}(M^n)$ for all closed manifolds. In fact, I am not aware of any previously published specific value of the constant at $vol(M^n)^{1\over n}$ in the general case of Gromov's systolic inequality for $n=3$ other than Gromov's $1296\sqrt{6}$ or Wenger's $118098$. For $n=3$ our value $ 2*3^{1\over 3}=2.88\ldots $ of the systolic constant is within the factor of $2$ of the (unknown) optimal value.
%One important modification is stronly reminiscent of what was done in the remark at the end of [Gu10].

Fifth, in the last section of the paper we also similarly improve the main result of [LLNR] and [P]. %We replace
%the assumption that all metric balls of some radius $r$ have $m$-dimensional Hausdorff content $\HC_m$ considerably than the $m$-th power of ${r\over c(m)}$ by a weaker assumption that this happens for some radius $t\leq r$ that depends on the center
%of the ball, and simultaneously 
We provide a four-page long self-contained proof of the inequality $UW_{m-1}(X)\leq 8m HC_m^{1\over m}(X)$ for a compact metric space $X$
as well as a local version of this result for boundedly compact $X$. This section heavily relies on [P] but contains several improvements and simplifications.

\subsection{Results.} Our first main theorem is:

\begin{theorem}
Let $M^n$ be a closed Riemannian manifold, and $r>0$ a real number. Assume that for every $x\in M^n$ there exists $t=t(x)\in (0,r]$ such that
the volume of the metric ball of radius $t$ centered at $x$ is
less than ${2t^n\over n!}$.  Then $UR_{n-1}(M^n)< r.$
\end{theorem}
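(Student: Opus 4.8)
The plan is to follow Papasoglu's inductive dimension-reduction scheme, but to execute the induction in a way that does not lose a constant factor at each step. The induction is on $n$. The base case $n=1$ asserts (roughly) that a closed Riemannian manifold, i.e.\ here a circle, in which every point $x$ has a ball of some radius $t(x)\le r$ of length less than $4t(x)$ has $UR_0<r$; this should reduce to an elementary covering/connectedness argument on the circle. For the inductive step, assume the statement in dimension $n-1$ and let $M^n$ satisfy the hypothesis for some $r>0$. The goal is to produce a continuous map to an $(n-1)$-dimensional complex all of whose fibers lie in balls of radius $<r$, or, using the covering formulation of $UR_{n-1}$ recalled in section~1.3, a covering of $M^n$ by connected open sets of radius $<r$ with no $(n+1)$-fold intersections.

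First I would fix a large finite set of points and, using a smooth approximation of the distance function and the coarea formula, build an (almost) minimizing hypersurface $\Sigma$ that separates $M^n$ into open pieces each of diameter smaller than (a controlled multiple of) $r$ --- this is the ``minimal hypersurface dividing the space into subsets of small diameter'' of Papasoglu. The crucial property, obtained by the cut-and-paste argument sketched in the introduction, is that for a suitable point-dependent radius $\rho(x)$ the $(n-1)$-volume of $\Sigma\cap B(x,\rho(x))$ is controlled by the $(n-1)$-volume of the metric sphere $\partial B(x,\rho(x))$, hence by $\frac{d}{dt}\mathrm{vol}\,B(x,t)$ at a well-chosen $t$. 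Here is where the careful, point-dependent choice of radius matters: for each $x$ I want to pick $\rho(x)\le t(x)$ so that, by an averaging/pigeonhole argument over $t\in(0,t(x)]$ applied to the monotone function $t\mapsto\mathrm{vol}\,B(x,t)<\frac{2t^n}{n!}$, the spherical volume $\mathrm{vol}\,\partial B(x,\rho(x))$ is at most $\frac{2\rho(x)^{n-1}}{(n-1)!}$. This is exactly the inequality needed so that $\Sigma$, with the induced length metric, satisfies the hypothesis of the theorem in dimension $n-1$ with the \emph{same} radius $r$ and the \emph{same} shape of constant $\frac{2t^{n-1}}{(n-1)!}$ --- no degradation.

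Next I would apply the inductive hypothesis to $\Sigma$: it has $UR_{n-2}(\Sigma)<r$, so $\Sigma$ is covered by connected open sets of radius $<r$ with no $n$-fold intersection, which can be thickened in $M^n$ to a neighbourhood of $\Sigma$ covered by connected open sets of radius $<r$ with multiplicity $\le n-1$ transverse to $\Sigma$. Adjoining to these the pieces of $M^n\setminus\Sigma$ (each of radius/diameter $<r$), and checking that near $\Sigma$ the total multiplicity does not exceed $n$, yields a covering of $M^n$ by connected open sets of radius $<r$ with no $(n+1)$-fold intersection, i.e.\ $UR_{n-1}(M^n)<r$. This gluing step is morally Lemma~2.5 of the paper; the point flagged in the introduction is to run it so the constant in the conclusion equals the constant in the hypothesis, which is what makes the final bound linear rather than exponential.

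The main obstacle, I expect, is the construction and regularity of the separating hypersurface together with making the spherical-volume estimate work with an \emph{honest} constant rather than ``up to $O(1)$'': one must simultaneously (i) guarantee the diameter bound on the complementary pieces, (ii) guarantee the point-dependent radius $\rho(x)$ exists with the sharp $\frac{2\rho(x)^{n-1}}{(n-1)!}$ spherical bound, and (iii) ensure $\Sigma$ is nice enough (a manifold, or a complex, with the induced length metric boundedly compact) for the induction to even be applicable. Reconciling (i) and (ii) --- the separation is a global constraint while the volume bound is purely local and point-dependent --- is the delicate part, and is presumably handled in the paper by first proving the corresponding statement for a finite point set / discretized version and then passing to a limit, with the factorial constants tracked through the coarea formula. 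The arithmetic identity making $\frac{2t^n}{n!}$ propagate to $\frac{2t^{n-1}}{(n-1)!}$ under differentiation of $t^n\mapsto n t^{n-1}$ is the clean reason the constant is exactly $\frac{n!}{2}$, and ultimately why $c(n)=(n!/2)^{1/n}\le n/2$.
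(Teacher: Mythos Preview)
Your outline is essentially the paper's proof: induction on $n$, an almost-minimal $r$-separating subpolyhedron $Z$, the cut-and-paste comparison with metric spheres showing $Z$ inherits the hypothesis one dimension down with the same constant $\frac{2t^{n-1}}{(n-1)!}$, and the gluing step (Lemma~2.5) turning $UR_{n-2}(Z)<r$ plus the $r$-separation into $UR_{n-1}(M)<r$.

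Two corrections to the parts you flag as uncertain. First, there is no discretization or finite-point-set stage. One works directly with the infimum of $(n-1)$-volume over \emph{all} $r$-separating subpolyhedra and picks a $\delta$-minimal one; regularity of $Z$ is obtained by smoothing the distance function so that generic level sets are subpolyhedra (Sard). Because $Z$ is then a subpolyhedron rather than a closed manifold, the induction must be run in the category of compact Riemannian polyhedra from the start; the base case is for $1$-dimensional polyhedra (graphs), reduced to trees. (Incidentally, for $n=1$ the bound is $2t=2t^1/1!$, not $4t$.)

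Second, the point you call ``delicate''---making the cut-and-paste save a \emph{uniform} $\delta>0$---is not handled by passing to a limit of discrete versions but by first proving the statement under the stronger hypothesis that there exist fixed $\epsilon,\tau>0$ with $t(x)\in(\tau,r]$ and $|B(x,t(x))|<\frac{2t(x)^n}{n!}(1-\epsilon)$ for every $x$ (Proposition~2.6). The coarea argument then produces a sphere of radius $s\in(\tau(\epsilon/3)^{1/n},\,t(x))$ with $|S|<\frac{2s^{n-1}}{(n-1)!}(1-\epsilon/3)$, so the replacement saves at least an explicit $\delta(n,\epsilon,\tau)>0$, and the $\epsilon$ degrades only to $\epsilon/6$ at each step while $r$ stays fixed. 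A separate short compactness argument on $M^n$ (continuity of $(x,R)\mapsto |B(x,R)|/R^n$ and lower-semicontinuity of the optimal $t(x)$) then shows that the bare strict inequality in the theorem's hypothesis already furnishes such $\epsilon$ and $\tau$; this is Theorem~2.7.
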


The relationships between $UR_{n-1}$, $UW_{n-1}$, $Fill Rad$ and $sys_1$ stated above immediately imply that:

\begin{theorem} Assume that $M^n$ is a compact $n$-dimensional smooth Riemannian manifold. Then %(or, more generally, a smooth Riemannian polyhedron). Then
\par\noindent
$$UR_{n-1}(M^n)\leq ({n!\over 2})^{1\over n}vol(M^n)^{1\over n}\leq {n\over 2}\ vol^{1\over n }(M^n),\ \ \ (1)$$
\par\noindent
$$UW_{n-1}(M^n)\leq 2({n!\over 2})^{1\over n}vol(M^n)^{1\over n}\leq n\ vol^{1\over n}(M^n),\ \ \ (2)$$
%\par\noindent
%and also $$UW_{n-1}(M^n)\leq 2\sqrt{e}n\ |M^n|^{1\over n};\ \ \  (3)$$
$$Fill Rad(M^n)\leq  ({n!\over 2})^{1\over n} vol(M^n)^{1\over n}\leq {n\over 2}\ vol^{1\over n}(M^n),\ \ \ (3)$$
\par\noindent
%and asymptotically
%$$Fill Rad(X^n)\leq n(1+o(1)) |X^n|^{1\over n};$$
%\par\noindent
%and, if $M^n$ is essential, then also
%$$sys_1(M^n)\leq 6(n!)^{1\over n} vol(M^n)^{1\over n}\leq {6\over e}(1+o(1))nvol(M^n)^{1\over n}\ \ \ (4).$$
%and
%$$sys_1(X^n)\leq 6\sqrt{e} n|X^n|^{1\over n}.$$
%$$Fill Rad(X^n)\leq \sqrt(e) n\ |X^n|^{1\over n},$$
%\par\noindent
%and asymptotically
%$$Fill Rad(X^n)\leq {n+1\over e}(1+o(1))\ vol(X^n)^{1\over n};$$
%\par\noindent
%as well as
%$$sys_1(X^n)\leq {6(n+1)\over e}(1+o(1))vol(X^n)^{1\over n},$$
%and
%$$sys_1(X^n)\leq 4.5(n+1)\ vol(X^n)^{1\over n}\leq 7nvol(X^n)^{1\over n}.$$
%\par\noindent
\end{theorem}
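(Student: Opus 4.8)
The plan is to obtain (1), (2) and (3) directly from Theorem 1.1, using in addition only the elementary comparisons $UR_{n-1}(X)\leq UW_{n-1}(X)\leq 2\,UR_{n-1}(X)$ and Gromov's inequality $Fill Rad(M^n)\leq\frac12 UW_{n-1}(M^n)$ recorded in Section 1.1, together with the purely arithmetic fact that $\left(\frac{n!}{2}\right)^{1/n}\leq\frac{n}{2}$. All the geometry lives in Theorem 1.1; what remains is a short bookkeeping argument.

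First I would prove the first inequality in (1). Set $V=vol(M^n)$ and fix an arbitrary real number $r$ with $r>\left(\frac{n!}{2}\right)^{1/n}V^{1/n}$, equivalently $\frac{2r^n}{n!}>V$. For every $x\in M^n$ the metric ball $B(x,r)$ is a subset of $M^n$, so $vol(B(x,r))\leq V<\frac{2r^n}{n!}$; thus the (strict) hypothesis of Theorem 1.1 holds with the constant choice $t(x)=r$ for every $x$, and Theorem 1.1 gives $UR_{n-1}(M^n)<r$. Since this holds for every $r>\left(\frac{n!}{2}\right)^{1/n}V^{1/n}$, taking the infimum over such $r$ yields $UR_{n-1}(M^n)\leq\left(\frac{n!}{2}\right)^{1/n}vol(M^n)^{1/n}$.

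Next I would dispose of the arithmetic inequality $\left(\frac{n!}{2}\right)^{1/n}\leq\frac{n}{2}$, i.e. $2^{n-1}n!\leq n^n$. Writing $a_n=\frac{n^n}{2^{n-1}n!}$, one has $a_1=1$ and $\frac{a_{n+1}}{a_n}=\frac{(n+1)^{n+1}}{2^n(n+1)!}\cdot\frac{2^{n-1}n!}{n^n}=\frac12\left(1+\frac1n\right)^{n}\geq 1$, since $\left(1+\frac1n\right)^n$ is nondecreasing in $n$ and equals $2$ at $n=1$; hence $a_n\geq 1$ for all $n\geq 1$. This is the desired inequality and completes (1). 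Then (2) follows from $UW_{n-1}(M^n)\leq 2\,UR_{n-1}(M^n)\leq 2\left(\frac{n!}{2}\right)^{1/n}vol(M^n)^{1/n}$ together with that estimate, and (3) follows from $Fill Rad(M^n)\leq\frac12 UW_{n-1}(M^n)\leq UR_{n-1}(M^n)\leq\left(\frac{n!}{2}\right)^{1/n}vol(M^n)^{1/n}$.

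I do not expect any genuine obstacle in this deduction — everything substantial has been packed into Theorem 1.1. The only two points deserving a moment's care are making sure the inequality handed to Theorem 1.1 is strict (hence the device of taking $r$ strictly above the target value and only afterwards passing to the infimum), and the one-line verification that $2^{n-1}n!\leq n^n$.
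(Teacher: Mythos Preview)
Your proposal is correct and follows exactly the route the paper indicates: the paper simply states that Theorem 1.2 is an immediate consequence of Theorem 1.1 together with the relations $UR_{n-1}\leq UW_{n-1}\leq 2UR_{n-1}$ and $Fill Rad\leq\frac12 UW_{n-1}$ from Section 1.1. You have spelled out the details the paper omits, including the handling of the strict inequality via an infimum and the verification of $2^{n-1}n!\leq n^n$.
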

\par\noindent
%{\bf Remarks.} 
%{\bf 1.} Roman Karasev e-mailed to the author that 
%the inequality $sys_1\leq 2UR_{n-1}$ for essential manifolds (or Riemannian polyhedra) seems to follow from the work of Albert Schwarz [S]. Indeed, it is known that one can also
%define $UR_{n-1}(M^n)$ as the infimum of $r$ such such that there exists a cover of %$M^n$
%by open sets $U_\alpha$ with radii $\leq r$ with multiplicity of the covering $\leq n$. If $sys_1(M^n)$ does not exceed $2r$, the image of $\pi_1(U_\alpha)$ in $\pi_1(M^n)$ is trivial. Therefore, the {\it Schwarz genus} of the universal covering of $M^n$ (introduced
%in [S]) does not exceed $n$. According to [S] this implies that the classifying map $M^n\longrightarrow B\pi_1(M^n)$ factors
%through an $(n-1)$-dimensional complex, and, therefore, $M^n$ is not essential. Note that the constant in this inequality is three times better than the constant in the corresponding Gromov's inequality. As a corollary, one obtains the systolic inequality with a better constant:
%$$sys_1(M^n)\leq 2(n!)^{1\over n} vol(M^n)^{1\over n}\leq {2\over %e}(1+o(1))nvol(M^n)^{1\over n}.\ \ \ \ (+)$$
%However, Schwarz's paper [S] is long and not easy to read. Therefore, at the end of the paper I will provide a short and self-contained argument proving (+) without using results or ideas from [S].\bf 2.} At the end of the paper we are going to present a completely elementary
Inequality (1) and the inequality $sys_1(X^n)\leq 6UR_{n-1}X^n)$ for essential 
Riemannian polyhedra which is a combination
of two inequalities: $sys_1(X^n)\leq 6FillRad(X^n)$ and $FillRad(X^n)\leq UR_{n-1}(X^n)$ that were proven in [Gr] immediately imply the following version
of Gromov's systolic inequality with linear in $n$ dimensional constant: For each essential Riemannian manifold $M^n$ $sys_1(M^n)\leq 3nvol^{1\over n}(M^n)$
that appeared in the first version of the present paper.
However, Roman Karasev e-mailed to me a very short proof of a stronger inequality:
$$sys_1(X^n)\leq 2UR_{n-1}(X^n).\ \ \ \ \ \ \eqno(4)$$
for essential polyhedral length spaces that does not involve the
Kuratowski embedding or the filling radius. Karasev's proof is based on work of Albert Schwartz [S], and in a nutshell goes as follows: 
%Theorem 14 in [S]
%implies that if $M^n$ is essential, then there is no open cover of $M^n$ by connected
%open sets $U_\alpha$ such that: 1) multiplicity of intersections of %$U_\alpha$
%does not exceed $n$; 2) If $p:\tilde M^n\longrightarrow M^n$ is the universal
%covering of $M^n$, then the restriction of $p$ to each connected component
%of $p^{-1}(U_\alpha)$ is a homeomorphism. On the other hand 
The second definition
of Alexandrov width implies that there is an open covering
of $M^n$ by connected open sets of radius $\leq UR_{n-1}(M^n)$ with multiplicity of intersections $\leq n$. If $sys_1(M^n)>2UR_{n-1}(M^n)$, then each loop in $U_\alpha$
is contractible in $M^n$. Therefore, 
each $U_\alpha$ lifts to $\tilde M^n$ as the collection of  sets
$\{(\tilde U_\alpha)_g\}_{g\in\pi_1(M^n)}$ homeomorphic to $U_\alpha$.
The collection of all these sets forms a covering of $M^n$ of multiplicity $\leq n$.
Theorem 14 in [S] asserts that the existence of such a covering of $\tilde M^n$
implies that $M^n$ is not essential.
%each open set $U_\alpha$
%can be lifted to a (disconnected) open set $\tilde U_\alpha$
%in $\tilde M^n$, and the collection of sets $\tilde U_\alpha$
%form an open covering of $\tilde M^n$ the existence of which contradicts the conclusion of Theorem 14 of [S].
With Karasev's permission I will present a self-contained proof of inequality (4) at the end of section 2.

As an immediate corollary:

\begin{theorem}
If $M^n$ is a closed essential Riemannian manifold, then $sys_1(M^n)\leq  2({n!\over 2})^{1\over n}vol^{1\over n}(M^n)\ \leq  n\ vol^{1\over n}(M^n)$. 
\end{theorem}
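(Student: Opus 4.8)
The final statement, Theorem 1.3, asserts that a closed essential Riemannian manifold $M^n$ satisfies $sys_1(M^n)\leq 2({n!\over 2})^{1\over n}vol^{1\over n}(M^n)\leq n\, vol^{1\over n}(M^n)$. The plan is to obtain this as a direct combination of two facts already assembled in the excerpt: inequality (1) of Theorem 1.2, namely $UR_{n-1}(M^n)\leq ({n!\over 2})^{1\over n}vol^{1\over n}(M^n)$, and Karasev's inequality (4), $sys_1(X^n)\leq 2UR_{n-1}(X^n)$, valid for essential polyhedral length spaces and in particular for closed essential Riemannian manifolds.

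First I would invoke Theorem 1.2(1) to bound $UR_{n-1}(M^n)$ from above by $({n!\over 2})^{1\over n}vol^{1\over n}(M^n)$; this is where the real analytic content lies, but it is granted to us. Next I would apply inequality (4) to $X^n=M^n$ — which is legitimate since a closed essential Riemannian manifold, viewed with its induced length metric, is an essential polyhedral length space (or, alternatively, one routes through $sys_1\leq 6\,FillRad\leq 6\,UR_{n-1}$ as noted in the excerpt, though (4) gives the sharper factor $2$). Composing, $sys_1(M^n)\leq 2UR_{n-1}(M^n)\leq 2({n!\over 2})^{1\over n}vol^{1\over n}(M^n)$. Finally, the elementary estimate $({n!\over 2})^{1\over n}\leq {n\over 2}$ — which follows from $n!\leq 2\cdot(n/2)^n$, itself an easy induction or an AM–GM/Stirling-type comparison — upgrades the bound to $n\, vol^{1\over n}(M^n)$.

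There is essentially no obstacle here beyond bookkeeping: the theorem is a corollary, and the word "immediate" in the excerpt is accurate. The only point worth a sentence of care is ensuring the hypotheses of (4) genuinely apply to a smooth closed essential Riemannian manifold (essentiality in the manifold sense implies essentiality in the polyhedral sense used to state (4), and smoothness gives a triangulation making $M^n$ a polyhedral length space); this is standard. I would present the proof in two lines, citing Theorem 1.2 and inequality (4), and then note the numerical inequality $({n!\over 2})^{1\over n}\leq {n\over 2}$ for the final form.
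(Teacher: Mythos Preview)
Your proposal is correct and matches the paper's approach exactly: the paper presents Theorem 1.3 as an immediate corollary of inequality (1) from Theorem 1.2 together with Karasev's inequality (4), $sys_1(X^n)\leq 2UR_{n-1}(X^n)$, followed by the elementary bound $({n!\over 2})^{1\over n}\leq {n\over 2}$. There is nothing to add.
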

%essential Riemannian manifolds satisfy
%$$sys_1(M^n)\leq 2(n!)^{1\over n}vol(M^n)^{1\over n}={2\over e}(1+o(1)) n vol(M^n)^{1\over n}.\ \ \ (4')$$
\par\noindent
%{\bf 2.} 
%Note that for each $n\geq 2$, $(n!)^{1\over n}\leq {n\over \sqrt{2}}$. Therefore, 
%$Fill Rad(M^n)\leq {1\over\sqrt{2}}nvol(M^n)^{1\over n}$. If $M^n$ is essential, then the inequality ($4'$) implies that
%$sys_1(M^n)\leq \sqrt{2}nvol(M^n)^{1\over n}$.
\par\noindent
{\bf Remark.} As $(n!)^{1\over n}={1\over e}(1+o(1))n,$ for all sufficiently large $n$ $sys_1(M^n)< 0.74\ n\ vol(M^n)^{1\over n}$. If $n=2$, the inequality in the theorem is well-known, and a better estimate can be found in section 1.4.3 of [BZ]. If $n=3$, then
%\begin{corollary}
%If $M^3$ is a a closed essential Riemannian manifold, then $sys_1(M^n)\leq 6^{1\over3}$
%\end{corollary}
%Note that 
the constant at $vol^{1\over 3}(M^3)$ in Theorem 1.3 is equal to $2*3^{1\over 3}=2.88\ldots$. On the other hand, we see that the optimal value of this constant for $n=3$ cannot be less than $\pi^{1\over 3}=1.46\ldots$, as this is the value that one gets in the case of $RP^3$ with the canonical metric. So, for $n=3$, our constant is within the factor of $1.97$ from the optimal systolic constant. %If $n\geq 4$, $sys_1(M^n)< 0.56\ n\ vol^{1\over n}(M^n)$, and for $n\geq 6$ $sys_1(M^n) < {n\over 2}\ vol^{1\over n}(M^n)$.
%{\bf 2.} At the end of the paper we are going to present a completely elementary
%proof of Gromov's inequality $sys_1(X^n)\leq 6UR_{n-1}X^n)$ 
%for essential Riemannian polyhedra that does not involve the
%Kuratowski embedding or the filling radius.
\vskip 0.3truecm

We will prove Theorems 1.1, 1.2 in a somewhat greater generality, namely for compact Riemannian polyhedra (i.e. finite polyhedra endowed
with a smooth Riemannian metric on each maximal simplex, so that Riemannian metrics on two simplices that have a common face match on this face). Note that all previous definitions and quoted results by Gromov can be directly extended to Riemannian polyhedra, 
which was observed by Gromov in [Gr]. 

Below a subpolyhedron will always mean a compact subpolyhedron with smoothly embedded faces
endowed with the Riemannian metric of the ambient Riemannian polyhedron (and the corresponding intrinsic distance).
Also, below $|X|$ will denote the volume of $X$. % i.e. $n$-dimensional Hausdorff measure for $n=dim (X)$.
Sometimes we write it as $|X|_n$, when we want to emphasize the dimension.

In the last section, we give a self-contained proof of the following quantitative improvement of a result that first appeared in [LLNR] and then was reproven in [P]:

\begin{theorem}
\par\noindent
1. Let $X$ be a compact metric space, $r>0$, $n$ a positive integer. Assume that for each metric ball $B$ of radius $r$ in $X$, $HC_n(B)<({r\over 4n})^n$.
Then $UR_{n-1}(X)<r$.
\par\noindent
2. Let $X$ be a compact metric space. Then $UR_{n-1}(X)\leq 4n\HC_n(X)^{1\over n}$.
\par\noindent
3. Let $X$ be boundedly compact. Assume that for some positive $\mu$ and each metric ball $B$ of radius $r$, $HC_n(B)\leq({r\over 8n})^n-\mu$.
Then $UR_{n-1}(X)<r$.
\end{theorem}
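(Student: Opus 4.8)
The plan is to prove part~1 by induction on $n$ and to deduce parts~2 and~3 from it. Part~2 is immediate: if $HC_n(X)>0$ then for every $r>4n\,HC_n(X)^{1/n}$ each metric ball $B$ of radius $r$ has $HC_n(B)\le HC_n(X)<(r/4n)^n$, so part~1 gives $UR_{n-1}(X)<r$; letting $r\downarrow 4n\,HC_n(X)^{1/n}$ yields $UR_{n-1}(X)\le 4n\,HC_n(X)^{1/n}$ (the case $HC_n(X)=0$ being trivial). Part~3 is the boundedly compact analogue: one exhausts $X$ by an increasing sequence of compact balls $B(p,N)$, applies part~1 on each (a ball of radius $r$ inside $B(p,N)$ is contained in a ball of radius $r$ in $X$, so the hypothesis is inherited), and patches the resulting Alexandrov-width maps into a single map $X\to K^{n-1}$. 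The slack $\mu>0$ supplies the room needed for this limiting procedure, and the worsening of the constant from $4n$ to $8n$ is the price of the patching: each patching step may roughly double the radii of the fibers, so one runs part~1 at scale $r/2$.

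The heart of the matter is the inductive step for part~1. Assume the statement in dimension $n-1$ and let $X$ be compact with $HC_n(B)<(r/4n)^n$ for every ball $B$ of radius $r$. The goal is to produce a closed ``wall'' $\Sigma\subset X$ such that every connected component of $X\setminus\Sigma$ has radius $<r$ and such that $\Sigma$, with the induced metric, again satisfies a hypothesis of the same type in dimension $n-1$. Granting this, the inductive hypothesis gives $UR_{n-2}(\Sigma)<r$; taking a covering of $\Sigma$ by connected open sets of radius $<r$ with multiplicity $\le n-1$, thickening it slightly inside $X$ so that it covers a neighbourhood of $\Sigma$, and adjoining the (pairwise disjoint) components of the part of $X\setminus\Sigma$ left uncovered --- each of radius $<r$ --- one obtains a covering of $X$ by connected open sets of radius $<r$ with multiplicity $\le n$, whence $UR_{n-1}(X)<r$. (When $X\setminus\Sigma$ has infinitely many components their radii tend to $0$, and the tiny ones are absorbed into the thickened covering of $\Sigma$.)

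To produce $\Sigma$ one follows Papasoglu's idea. Consider the family of closed subsets of $X$ whose complementary components all have radius $<r$ (nonempty, as $X$ itself qualifies), and let $\Sigma$ be an almost-minimizer of $HC_{n-1}$ in this family; one works with an almost-minimizing sequence rather than an honest minimizer, because $HC_{n-1}$ need not be lower semicontinuous under Hausdorff convergence of closed sets. The decisive property is the surgery inequality $HC_{n-1}(\Sigma\cap B)\le HC_{n-1}(\partial B)$ for every metric ball $B$ of radius $<r$: otherwise one replaces $\Sigma\cap B$ by the metric sphere $\partial B$, i.e. passes to $\Sigma'=(\Sigma\setminus B)\cup\partial B$, which still lies in the family (a component of $X\setminus\Sigma'$ is either contained in $B$, hence of radius $<r$, or contained in a component of $X\setminus\Sigma$, hence of radius $<r$) while having strictly smaller $HC_{n-1}$ --- a contradiction. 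Combining this with the coarea (Eilenberg-type) inequality $\int_0^r HC_{n-1}(\partial B(x,\tau))\,d\tau\le 2\,HC_n(B(x,r))$ for the $1$-Lipschitz function $d(x,\cdot)$, one finds for each $x$ a radius $s(x)\in(0,r)$ with $HC_{n-1}(\partial B(x,s(x)))$ --- hence also $HC_{n-1}(\Sigma\cap B(x,s(x)))$ --- at most $(2/r)\,HC_n(B(x,r))<2r^{n-1}/(4n)^n$, and $2(4(n-1))^{n-1}<(4n)^n$ holds with ample room. Converting these scale-$s(x)$ bounds on $\Sigma$ into the fixed-scale-$r$ hypothesis that the inductive hypothesis requires is the one point demanding genuine care; that it can be done without losing a dimensional factor is exactly what keeps the constant linear (equal to $4n$) rather than exploding as in Papasoglu's version.

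The main obstacle is the surgery/minimality step together with the compactness it rests on. Because Hausdorff content is only subadditive, ``deleting $\Sigma\cap B$ and inserting $\partial B$'' does not literally subtract $HC_{n-1}(\Sigma\cap B)$ and add $HC_{n-1}(\partial B)$, so the minimality comparison must be run at the level of coverings: one starts from a near-optimal covering of $\Sigma$, discards the balls near $B$, and adjoins an efficient covering of $\partial B$, the edge effects near $\partial B$ being absorbed by choosing the surgery scale (via the coarea inequality) so that a thin annulus around $\partial B$ also has small content. One must simultaneously check that the surgered set remains in the admissible family and that the whole argument survives replacing the (possibly nonexistent) minimizer by an almost-minimizer. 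The remaining ingredients are comparatively routine: the base case $n=1$ --- if some component of $X$ has radius $\ge r$, then within the ball of radius $r$ about a suitable point of it the distance function to that point is a $1$-Lipschitz surjection onto an interval of length $\ge r$, so that ball has $HC_1\ge r/2$, contrary to the hypothesis; hence instead $X$ splits into clopen pieces of radius $<r$ and $UR_0(X)<r$ --- the gluing of the width map of $\Sigma$ with the small complementary pieces, and the reductions of parts~2 and~3.
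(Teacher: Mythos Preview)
Your overall architecture for part~1 (induction on $n$, Papasoglu's surgery against an almost-minimal $r$-separating set, then Lemma~2.5/3.6--style gluing) matches the paper, and your derivation of part~2 from part~1 is exactly right. But two of the three places you yourself flag as delicate are in fact left unresolved, and your plan for part~3 is not the one that works.

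\textbf{Non-additivity and the surgery step.} Saying the comparison ``must be run at the level of coverings'' and that ``edge effects \ldots\ are absorbed'' is not enough. The paper's actual device is to minimize not $HC_{n-1}$ but the \emph{bounded} content $HC_{n-1}^{(b)}$ with $b\approx r/(4(n+1))$. With this cap on radii, any ball in a near-optimal cover of $Z$ that meets the inner test ball $\beta$ of radius $\rho=r(1-\tfrac{1}{n+1})$ is trapped inside the concentric ball of radius $r(1-\tfrac{1}{2(n+1)})$, hence is disjoint from the part of $Z$ outside the surgery sphere $S$. This forces genuine additivity, $HC_{n-1}^{(b)}(Z)\ge HC_{n-1}^{(b)}(Z\setminus \text{int}\,S)+HC_{n-1}(Z\cap\beta)$, which is what makes the replacement $Z\mapsto (Z\setminus \text{int}\,S)\cup(S\cap Y)$ a strict decrease. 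Your description never introduces this bounded content and gives no substitute mechanism for the additivity; without it the surgery inequality $HC_{n-1}(\Sigma\cap B)\le HC_{n-1}(\partial B)$ simply does not follow from almost-minimality.

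\textbf{Scale management and the linear constant.} You get, for each $x$, some $s(x)\in(0,r)$ with $HC_{n-1}(\Sigma\cap B(x,s(x)))$ small, and then say that converting these variable-scale bounds into the fixed-scale hypothesis ``demands genuine care'' and ``can be done''. In the paper this is not a conversion at all: one fixes $\rho=r(1-\tfrac{1}{n+1})$ once and for all, applies the coarea lemma on the annulus $r(1-\tfrac{1}{2(n+1)})<|\,\cdot\,|<r$ to find the surgery sphere there, and proves directly that \emph{every} ball of radius $\rho$ satisfies $HC_{n-1}(Z\cap\beta)<(\rho/4(n-1))^{\,n-1}$ (with the shift $n\to n-1$ in your indexing). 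The identity $(1-\tfrac{1}{n+1})/(4n)=1/(4(n+1))$ is exactly what propagates the constant $4n$ unchanged through the induction. Your write-up does not contain this computation, and the variable $s(x)$ route you sketch does not obviously yield a fixed-scale hypothesis without losing a factor at each step.

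\textbf{Part 3.} Exhausting by $B(p,N)$ and ``patching the Alexandrov-width maps'' is not what the paper does and is not clearly feasible: the target complexes for different $N$ are unrelated, and there is no limiting procedure for such maps or for the corresponding covers. The paper instead covers $X$ by two interlocking families of compact annuli of width $8\rho$, takes an almost-minimal $r$-separating set in each annulus, discards the pieces that only separate together with the annulus boundary, and unions the remainders into a single $r$-separating set for $X$. Any ball of radius $\rho$ meets at most two of these pieces, which is precisely where the extra factor $2$ (hence $8n$ in place of $4n$) comes from --- not from ``doubling radii of fibers'' in a patching step.
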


\par\noindent
{\bf Remark.} As $UW_{n-1}(X)\leq 2UR_{n-1}(X)$, we also immediately obtain the corresponding upper bounds for the Urysohn width of $X$, $UW_{n-1}(X)$, that differ from the upper bounds for $UR_{n-1}(X)$ by a factor of $2$. For example, if $X$ is compact, then $UW_{n-1}(X)\leq 8nHC_n^{1\over n}(X)$.

\subsection{ Equivalence of the two definitions of the $(n-1)$-dimensional Alexandrov widths.} 

To see that the two definitions of Alexandrov width given in section 1.1 are equivalent for all compact $X$, denote $UR_{n-1}(X)$ in the sense of the first definition by $\rho$, and the second by $r$.
We will first demonstrate that $r\leq\rho+\epsilon$ for
an arbitrarily small positive $\epsilon$, and then demonstrate that $\rho\leq r$. To prove the first
inequality choose $K^{n-1}$ and $f$ such that $\max_{k\in K^{n-1}}rad(f^{-1}(k))$ is very close to $\rho$. 
Now consider a very fine covering of $K^{n-1}$ 
by open sets $V_\beta$ such that each $(n+1)$-tuple of these sets has the empty intersection.
Finally, define the collection $U_\alpha$ as the collection of all connected components of open sets $f^{-1}(V_\beta)$. Note that 
the radii of $U_\alpha$ do not exceed $\max_{k\in K^{n-1}}rad(f^{-1}(k))+\epsilon$, where $\epsilon>0$ can be made arbitrarily small
by choosing the covering $V_\beta$ sufficiently fine.
To see that $\rho\leq r$, consider the nerve $K^{n-1}$ of the covering $U_\alpha$,
and the standard map $f:X\longrightarrow K^{n-1}$ defined using a partition of unity subordinate to the covering
$\{U_\alpha\}$. Now note that the inverse images of each point of $K^{n-1}$ under $f$ 
will be contained in one of the sets $U_\alpha$.

\section{ Proof of Theorems 1.1 and 1.3.}

%All these inequalities are not sensitive
%to $C^1$-small perturbations of the Riemannian metric. As we can arbitrarily closely approximate a smooth Riemannian metric
%by an analytic one, we can assume that the Riemannian metric $M^n$ is analytic.
%In this case the distance function to a point
%is well-known to be subanalytic and, therefore,
%its level sets (=metric spheres) are subanalytic polyhedra. This will imply that all sets that we will be dealing with
%below are subanalytic polyhedra. (Another way to ensure that the level sets are polyhedra is to (piece-wise) smooth out the distance function and to use Sard's theorem, as it was done for similar purposes in [Gr].) 
%Our coarea inequality (Lemma 5.3 of [LLNR]) no longer applies,
%yet is is well-known that its version for volumes holds even without the constant $2$ in the right-hand side:
The well-known coarea inequality for Lipschitz functions of Riemannian manifolds immediately generalizes
to Riemannian polyhedra ([BZ]) and implies that
given a Riemannian polyhedron $X^n$, a real $r$, and a metric ball $B$ of radius $r$ centered at a point $x\in X^n$,
$\int_0^r |S_s|_{n-1}ds\leq |B|_n$, where $S_s$ denotes the metric sphere of radius $s$ centered at $x$. 

We prefer to work
in the situation when for almost all $s$, $S_s$ is a subpolyhedron. One well-known way to achieve this is to approximate the distance function by $(1+\tau)$-Lipschitz function that is smooth
on each open simplex (cf. section 3 of [Ga]), and to replace the distance function by this approximation. (Here $\tau$ can be chosen to be arbitrarily small.) Starting from Lemma 2.2 ``metric spheres"
will really mean the level sets of a sufficiently close smooth approximation of the distance function. Now Sard's theorem implies
that almost all geodesic spheres are subpolyhedra. (Sard's theorem will separately apply to the restriction of the smooth
approximation of the distance function to each open simplex.)
However, the coarea inequality above
and all inequalities below
will hold only up to a factor of $1+f(\tau)$, %where $\tau>0$ can be chosen to be arbitrarily small,
where $f$ will be some specific function such that $\lim_{\tau\longrightarrow 0}f(\tau)=0$. Eventually, one will pass
to the limit as $\tau\longrightarrow 0$. 
For the sake of readability we will not be mentioning terms of the form $1+f(\tau)$ in the inequalities, and will just pretend that the distance function is smooth on each open simplex.

\begin{lemma}
Let $X$ be a compact Riemannian polyhedron of dimension $\leq 1$. Assume that there exists $r>0$ such that for
each $x\in X$ there exists a metric ball $B$ centered at $x$ of radius $t(x)\in (0,r]$ such that $|B|<2t(x)$.
%Assume that for every metric ball $B$ of radius $r$ in  $X$ $|B|< r$.
Then $UR_0(X)< r$. In other words each connected component of $X$ can be covered by a 
a metric ball of radius $<r$.
\end{lemma}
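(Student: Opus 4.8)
The plan is to reduce to the case where $X$ is connected and then show that a single ball of radius $<r$ covers it. So assume $X$ is a connected compact Riemannian polyhedron of dimension $\leq 1$, i.e.\ a finite metric graph (with the intrinsic length metric). Pick any point $x_0\in X$ and let $B_0$ be the ball of radius $t(x_0)\in(0,r]$ centered at $x_0$ with $|B_0|<2t(x_0)$. If $B_0=X$ we are done, since then $X$ is covered by a ball of radius $t(x_0)\leq r$, and in fact by a ball of radius strictly less than $r$ after a harmless shrink (or directly if $t(x_0)<r$; the case $t(x_0)=r$ is handled by the strict length bound below). Otherwise, I will argue that the hypothesis forces a contradiction with $X$ being connected and ``large''.

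The key step is a length/coarea estimate. For the ball $B(x,t)$ centered at $x$, the coarea inequality specialized to dimension $1$ gives $\int_0^t \#S_s\, ds \leq |B(x,t)|$, where $\#S_s$ is the number (with multiplicity, using the smooth approximation convention above) of points in the metric sphere of radius $s$. As long as $s<$ the distance from $x$ to the ``far boundary'' of the component, i.e.\ as long as $B(x,s)\neq X$, the sphere $S_s$ is nonempty for a.e.\ $s$ (connectedness: one can always continue a shortest path), so $\#S_s\geq 1$. Hence if $B(x,t)\neq X$ then $|B(x,t)|\geq \int_0^t 1\, ds = t$. More is true: if $x$ does not disconnect $X$ near the sphere — generically a sphere of a connected graph that is not all of $X$ meets the graph in at least two points on ``two sides'' — one gets $\#S_s\geq 2$ and thus $|B(x,t)|\geq 2t$, directly contradicting $|B(x,t)|<2t(x)$. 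I would make this precise by observing that for a connected graph $X$ and a point $x$ with $B(x,t)\subsetneq X$, either $S_s$ has at least two points for a.e.\ $s<t$ (two distinct shortest-path directions reaching the complement), giving $|B|\geq 2t\geq 2t(x)$, a contradiction; or $X$ is a ``segment'' type configuration where the complement hangs off a single direction, in which case I instead use the hypothesis at the far endpoint to push $x$'s ball all the way across.

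The cleanest route, which I would actually take: suppose for contradiction that no ball of radius $<r$ covers the (connected) component $X$; then $\diam_{\text{rad}}$-type reasoning shows that for every $x$ the ball $B(x,r)$ is a proper subset, hence $B(x,t(x))$ is a proper subset, hence (by the $\#S_s\geq 1$ bound) $|B(x,t(x))|\geq t(x)$ — not yet a contradiction. To get the factor $2$, I would pick $x$ to be a point realizing the radius of $X$ (a ``center'' minimizing $\max_{y}d(x,y)$), so that moving in \emph{any} direction from $x$ eventually reaches the farthest point; for such a center, a.e.\ sphere $S_s$ with $s<\operatorname{rad}(X)$ has at least two points (otherwise $x$ was not a center — one could move toward the unique far direction and strictly decrease the eccentricity), yielding $|B(x,t(x))|\geq \int_0^{t(x)}2\,ds = 2t(x)$ whenever $t(x)\leq \operatorname{rad}(X)$. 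If $t(x)\leq\operatorname{rad}(X)$ this contradicts $|B|<2t(x)$; if $t(x)>\operatorname{rad}(X)$ then $B(x,t(x))\supseteq B(x,\operatorname{rad}(X))=X$, so $X$ is covered by a ball of radius $t(x)\leq r$, and since $\operatorname{rad}(X)<t(x)\leq r$ we may take a slightly smaller radius, still $<r$, that still covers, giving $UR_0(X)<r$ directly. Either way we reach the desired conclusion. I expect the main obstacle to be the careful verification that a center of a finite connected graph has $\#S_s\geq 2$ for almost every $s$ below its radius — this needs the smooth-approximation convention from the preceding paragraph and a small case analysis of how the spheres of the perturbed distance function behave near vertices, but it is elementary graph combinatorics once set up. A final remark: the strictness ``$UR_0(X)<r$'' rather than ``$\leq r$'' comes precisely from the strict inequality $|B|<2t(x)$ in the hypothesis, which forces, in the contradiction branch, a strict slack that propagates to a covering radius strictly below $r$.
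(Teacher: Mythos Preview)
Your approach is essentially the same as the paper's: both apply the hypothesis at a \emph{center} $p$ of (each connected component of) $X$ and argue that $|B(p,t)|\geq 2t$ for every $t\leq R=\operatorname{rad}(X)$, which forces $t(p)>R$ and hence $R<r$. The difference is in how the inequality $|B(p,t)|\geq 2t$ is obtained. The paper first reduces to the case where $X$ is a \emph{tree}, by detaching enough edges to kill all cycles; this only increases distances, so the hypothesis persists on the tree, and any covering ball of the tree pushes forward to a covering ball of $X$. Once $X$ is a tree, the center $p$ has two farthest points $x,x'$ with the unique path from $x$ to $x'$ passing through $p$ and having length $2R$; so $B(p,t)$ visibly contains at least $2t$ of that path for $t\leq R$, and the argument is done in one line.

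You skip the tree reduction and instead assert directly that $\#S_s(p)\geq 2$ for a.e.\ $s<R$ in an arbitrary connected graph, which you rightly flag as the ``main obstacle.'' The claim is true, and your heuristic---if some sphere $S_s$ with $s<R$ consists of a single point $q$, then $q$ is a cut point and sliding $p$ toward $q$ strictly decreases the eccentricity---can be made rigorous, but in a graph with cycles it takes a few lines of care (e.g.\ checking that distances to all points beyond $q$ really drop by the full $\epsilon$). The tree reduction is exactly the trick that makes this verification unnecessary. Separately, the first half of your write-up (the generic point $x_0$, the ``two sides'' discussion, the ``segment-type'' case split) is a false start that you yourself abandon for the center argument; you could delete it and go straight to the center.
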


\begin{proof}
%The coarea inequality implies that for each point $x\in X$ there exists $\rho<t(x)$ such 
%the geodesic sphere of radius $\rho$  centered at $x$ is empty. 
%This means that $X$ is the union of a family of disjoint closed metric balls with radii $<r$. 
%%%%%%%%
%We would like to prove that each connected component of $X$ can be covered by a closed metric ball of radius $<{r\over 2}$.
First, note that without any loss of generality we can assume that $X$ is connected.

%Without any loss of generality we can assume that $X$ is connected,
%and, therefore, is contained in just one metric ball of radius
%$r$ centered at $x$. So, we have a weaker assertion for balls
%of radius $<r$ instead of $<{r\over 2}$.
%%%%%%%%
%Use the compactness of $X^1$ to find a finite subcover $B_1,\ldots , %B_N$ with radii $r_1,\ldots ,r_N$.
%Now for each $i$ one can map $B_i\bigcap X$ to the center of $B_i$.
%Therefore, $UR_0(X)\leq\max_{i=1}^N r_i <2r$.
%
%This assertion already leads to the estimates that are
%worse than the estimates in our theorems. To get
%the assertion of the lemma one needs
%to improve the constant in the conclusion by a factor of $2$.
%The case when $X$ has dimension $0$ is obvious, so we can assume that
%$X$ is a connected Riemannian (multi)graph. 
Second, observe that the lemma can be reduced to its particular case, when $X$ is tree (endowed with Riemannian metric on each edge). Indeed,
by disconnecting some of the edges of $X$ from
one of their endpoints to destroy cycles in $X$ we can transform $X$ into a Riemannian tree
$Y$. 
We also have a (quotient) map $f:Y\longrightarrow X$, obtained by identifying
new vertices of $Y$ with the corresponding old ones.
The distances in $Y$ are not less than distances 
between the images of the same points in $X$. Therefore, each
metric ball with center $y$ in $Y$ is a subset of the 
metric ball in $X$ with the center $f(y)$ and the same radius. Therefore, the assumption of the lemma holds
for $Y$. On the other, if $Y$ can be covered by a metric ball with center $y$, then the metric ball in $X$ with the center $f(x)$ and the same radius will cover $X$.

%Now we are going to prove that (1) the conclusion of the lemma
%or $Y$ implies the conclusion of theorem for $X$; (2) the conclusion
%of the lemma holds for $Y$.  To see (1) observe that 
%if the conclusion of the lemma holds for $Y$, then $Y$ can be covered by a closed metric ball of radius $<r$ (in the metric of $Y$).
%Then the metric ball with the
%same center and radius but in the metric of $X$ will cover $X$.
Therefore, we can assume that $X$ is a Riemannian tree. Let $p$ be a center of $X$, that is, a point in $Y$ realizing the minimum, $R$,
of $\max_{q\in X}dist_X(p,q)$. Let $x\in Y$ denote one of the most distant points of $Y$ from $p$ (in the metric of $Y$). (So, $R$ is the radius of $X$.) 
As $X$ is contained in the metric ball of radius $R$ centered at $p$, we need to prove that $R<r$.
%Applying the argument above, we see that
%$Y$ is contained in a closed metric ball of radius $u<r$ centered at $x$.
%Let $u<2r$ be the minimal radius of such a ball.

Observe that the definition of $p$ implies that there exists
another point $x'\in Y$ such that $dist_Y(p, x')=dist_Y(p, x)$, and the
(unique) shortest path from $x$ to $x'$ passes through $p$ (as $Y$ is a tree). Therefore,
$dist(x,x')=2dist(p,x)=2R$. This implies that for each $t\leq R$ the length of $X\cap B(x,t)\geq 2t$, and therefore $R<r$.
%Hence,  $Y$ is contained in the metric ball of radius ${u\over 2}<r$ centered at $p$.
%One can map the interior $U(x)$ of $S(x)$ to $x$. 
%Now consider a finite covering of $X$ by
%$U(x_i)$ for some $x_1,\ldots , x_N$. Map
%$U(x_1)$ to $x_1$. Then, proceeding inductively, one
%maps $U(x_i)\setminus \bigcup_{j=1}^{i-1}U(x_j)$
%to $x_i$.

\end{proof}

\begin{lemma}
Assume that $B$ is a metric ball of radius $r$ centered at $x\in X^{n+1}$, $\epsilon\in (0,1)$. Assume that $|B|\leq cr^{n+1}(1-\epsilon)$  for some $c$.
Let $\lambda=r({\epsilon\over 3})^{1\over n+1}$. %$\epsilon_1={\epsilon\over 3}$
There exists a subset $A$ of the open interval $(\lambda,r)$ of positive measure such that
for each metric sphere $S$ centered at $x$ with radius $t\in A$, %$\rho\in (r, r(1-{1\over n+1}))$
$|S|< (n+1)ct^n(1-{1\over 3}\epsilon)$, and $S$ is a subpolyhedron of $X^{n+1}$.
\end{lemma}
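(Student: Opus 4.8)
The plan is to extract the desired set $A$ from the coarea inequality via a pigeonhole/measure argument. First I would recall the coarea inequality stated just above: $\int_0^r |S_s|_{n} \, ds \le |B|_{n+1}$, where $S_s$ is the metric sphere of radius $s$ centered at $x$ (here, since we are in $X^{n+1}$, spheres are $n$-dimensional). By hypothesis $|B| \le c r^{n+1}(1-\epsilon)$, so $\int_0^r |S_s|_n \, ds \le c r^{n+1}(1-\epsilon)$.

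**Next**, I would argue by contradiction: suppose the set $A' = \{t \in (\lambda, r) : |S_t| < (n+1)ct^n(1 - \tfrac13\epsilon) \text{ and } S_t \text{ is a subpolyhedron}\}$ has measure zero. Since Sard's theorem (applied simplex-by-simplex to the smooth approximation of the distance function, as explained in the preamble) guarantees that $S_t$ is a subpolyhedron for almost every $t$, the failure must come from the size condition: for almost every $t \in (\lambda, r)$ we would have $|S_t|_n \ge (n+1)ct^n(1-\tfrac13\epsilon)$. Integrating this lower bound over $(\lambda, r)$ gives
$$\int_\lambda^r |S_t|_n \, dt \ge (n+1)c\left(1-\tfrac13\epsilon\right)\int_\lambda^r t^n \, dt = c\left(1-\tfrac13\epsilon\right)\left(r^{n+1} - \lambda^{n+1}\right).$$
Combining with the coarea bound and $\int_0^\lambda |S_t|_n\,dt \ge 0$, this yields
$$c\left(1-\tfrac13\epsilon\right)\left(r^{n+1} - \lambda^{n+1}\right) \le c r^{n+1}(1-\epsilon),$$
i.e. $(1-\tfrac13\epsilon)(r^{n+1} - \lambda^{n+1}) \le r^{n+1}(1-\epsilon)$.

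**The key computation** is then to check that the choice $\lambda = r(\epsilon/3)^{1/(n+1)}$, so that $\lambda^{n+1} = r^{n+1}\epsilon/3$, makes this inequality false. Indeed the left side becomes $(1-\tfrac13\epsilon)(1 - \tfrac13\epsilon)r^{n+1} = (1-\tfrac13\epsilon)^2 r^{n+1}$, and one needs $(1-\tfrac13\epsilon)^2 > 1-\epsilon$, which holds for $\epsilon \in (0,1)$ since $(1-\tfrac13\epsilon)^2 = 1 - \tfrac23\epsilon + \tfrac19\epsilon^2 > 1 - \tfrac23\epsilon > 1-\epsilon$. This contradiction shows $A'$ has positive measure, and taking $A = A'$ finishes the proof. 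I would also note that one should verify $\lambda < r$ (clear, since $\epsilon/3 < 1$) so that the interval $(\lambda, r)$ is nonempty to begin with.

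**The only mild subtlety** — not really an obstacle — is bookkeeping the $1+f(\tau)$ factors coming from the smooth approximation of the distance function, which the paper has already declared it will suppress; one checks the strict inequalities ($|S| < (n+1)ct^n(1-\tfrac13\epsilon)$ rather than $\le$, and the strict gap between $(1-\tfrac13\epsilon)^2$ and $1-\epsilon$) leave enough room to absorb these. Everything else is a one-line application of coarea plus the pigeonhole principle in integral form.
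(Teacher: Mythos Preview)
Your proof is correct and follows essentially the same approach as the paper: both argue by contradiction, integrate the assumed lower bound $|S_t|\ge (n+1)ct^n(1-\tfrac{\epsilon}{3})$ over $(\lambda,r)$ via the coarea inequality, and use Sard's theorem simplex-by-simplex for the subpolyhedron condition. The only cosmetic difference is in the final arithmetic: the paper crudely bounds $(1-\tfrac{\epsilon}{3})(r^{n+1}-\lambda^{n+1})>r^{n+1}(1-\tfrac{\epsilon}{3})-\lambda^{n+1}=r^{n+1}(1-\tfrac{2\epsilon}{3})$, whereas you keep the exact expression $(1-\tfrac{\epsilon}{3})^2 r^{n+1}$ and verify $(1-\tfrac{\epsilon}{3})^2>1-\epsilon$ directly.
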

\begin{proof}
Assume that the set of radii $>\lambda$ such that $|S|< (n+1)ct^n(1-{1\over 3}\epsilon)$ has measure zero. The coarea inequality implies $|B|\geq \int_\lambda^r c(n+1)t^n(1-\epsilon/3)dt> cr^{n+1}(1-\epsilon/3)-c\lambda^{n+1}=cr^{n+1}(1-{2\over 3}\epsilon)$, yielding a contradiction
with our assumption. To ensure that $S$ is a subpolyhedron, we apply Sard's theorem on each open simplex of $X^{n+1}$. (Recall,
that by distance function we actually mean a smooth approximation to the distance function.)
%The assertion 
%that $S$ can be chosen so that it is also a subpolyhedron would immediately 
%follows from Sard's theorem (applied to sets where the distance function to $x$ is smooth).
\end{proof}

\begin{definition}
A compact subpolyhedron $Y^{m-1}$ of $X^m$ is called $d$-separating if each connected
component of its complement $X^m\setminus Y^{m-1}$ can be covered by a metric ball of radius $\leq d$. %Let $HC_n^{(b)}(Y)$
%denote the infimum of $\sum_i r_i^n$ over all coverings of $Y$ by closed metric balls with radii $r_i$ such that all radii $r_i$ do not exceed $b$.
Denote
the infimum of $|Y|_{m-1}$ over all $d$-separating sets $Y$ in $X^m$ by $I_X(d, m-1)$.
If $\delta>0$
is a positive real number, a $d$-separating set $Y^{m-1}$ is called $\delta$-minimal if $|Y^{m-1}|\leq I_X(d,m-1)+\delta$.
\end{definition}

\begin{lemma}
Assume that $X^{n+1}$ is a Riemannian polyhedron of dimension $\leq n+1$  such that for some positive $r$, $\epsilon$ and $\tau$ each each $x\in X^{n+1}$
there exists $t(x)\in (\tau, r]$ %>\lambda$ 
such that the metric ball $B$ of radius $t(x)$ centered at $x$ satisfies
the inequality $|B|_{n+1}< {2t(x)^{n+1}\over (n+1)!}(1-\epsilon)$. Then there exists $\delta=\delta(n,\epsilon,\tau)$ such that
for every $\delta$-minimal $r$-separating set $Z$ and each $x\in X^{n+1}$ there exists $\rho \in (t(x)({\epsilon\over 3})^{1\over n+1},t(x))$ 
such that:
\par\noindent
(a) The metric ball $\beta$ in $X^{n+1}$ of radius $\rho$  centered at $x$ satisfies
$$|Z\bigcap\beta|_n< {2\rho^n\over n!}(1-{\epsilon\over 6});$$
\par\noindent
(b) If $x\in Z$, and $\alpha$ denotes the metric ball in $Z$ of radius $\rho$ centered at $x$, where $Z$ is endowed with the intrinsic metric, then the volume of $\alpha$ is less than ${2\rho^n\over n!}(1-{\epsilon\over 6})$.
\end{lemma}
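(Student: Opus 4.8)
{\it Proof plan.} The statement is a volume-comparison fact about a near-minimal separating hypersurface, and the natural approach is to combine the hypothesis with Lemma 2.2 and an exchange (``surgery'') argument that exploits minimality of $Z$. Fix $x\in X^{n+1}$ and set $c = \frac{2}{(n+1)!}$, $r_0 = t(x)$, so the hypothesis gives $|B(x,r_0)|_{n+1} < c\,r_0^{n+1}(1-\epsilon)$. Applying Lemma 2.2 with this $B$, this $c$, and the same $\epsilon$, I obtain a positive-measure set $A\subset(\lambda,r_0)$ with $\lambda = r_0(\epsilon/3)^{1/(n+1)}$ such that for each $t\in A$ the sphere $S_t = S(x,t)$ is a subpolyhedron with $|S_t|_n < (n+1)c\,t^n(1-\tfrac13\epsilon) = \frac{2t^n}{n!}(1-\tfrac13\epsilon)$. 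The plan is to take $\rho\in A$ to be a radius for which in addition the coarea inequality applied \emph{inside} $Z$ (with its intrinsic metric) controls $|\alpha|_n$, where $\alpha$ is the intrinsic ball $B_Z(x,\rho)$; since the metric on $Z$ is intrinsic, $\partial\alpha$ in $Z$ has dimension $n-1$ and coarea on the $n$-manifold-with-boundary-or-polyhedron $Z$ gives $\int_0^\rho |\partial B_Z(x,s)|_{n-1}\,ds \leq |\alpha|_n$, which is not directly what we want — rather, the right move is to run the same Lemma 2.2-type argument one dimension down, using that the intersections of $Z$ with metric balls inherit the smallness, which is exactly the inductive content Papasoglu's idea provides.

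The core of part (a) is the surgery estimate. Suppose, for contradiction, that for \emph{every} $\rho\in(\lambda, r_0)$ we have $|Z\cap\beta(\rho)|_n \geq \frac{2\rho^n}{n!}(1-\tfrac16\epsilon)$, where $\beta(\rho) = B(x,\rho)$. Pick $\rho\in A$ (positive measure, so this is possible) with $|S_\rho|_n < \frac{2\rho^n}{n!}(1-\tfrac13\epsilon)$. Now form a new competitor $Z'$ by removing $Z\cap\beta(\rho)$ from $Z$ and inserting (a subpolyhedron approximating) the sphere $S_\rho$: the key geometric observation — the one highlighted in the introduction — is that $Z' = (Z\setminus\beta(\rho))\cup S_\rho$ is still $r$-separating, because each component of $X^{n+1}\setminus Z'$ is either a component of $X^{n+1}\setminus Z$ shrunk by removing part of it (still covered by a ball of radius $\leq r$), or is contained in $\beta(\rho)\subset B(x,r_0)\subset B(x,r)$, hence covered by a ball of radius $\leq r$. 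Then
$$|Z'|_n \leq |Z|_n - |Z\cap\beta(\rho)|_n + |S_\rho|_n < |Z|_n - \tfrac{2\rho^n}{n!}(1-\tfrac16\epsilon) + \tfrac{2\rho^n}{n!}(1-\tfrac13\epsilon) = |Z|_n - \tfrac{2\rho^n}{n!}\cdot\tfrac16\epsilon .$$
Choosing $\delta = \delta(n,\epsilon,\tau)$ smaller than $\frac{2\lambda^n}{n!}\cdot\frac{\epsilon}{6} \geq \frac{2}{n!}\cdot\frac{\epsilon}{6}\cdot\big(\tau(\epsilon/3)^{1/(n+1)}\big)^n$ (using $\rho > \lambda \geq \tau(\epsilon/3)^{1/(n+1)}$ since $t(x)>\tau$), this contradicts $\delta$-minimality of $Z$, i.e. $|Z'|_n \geq I_X(r,n) > |Z|_n - \delta$. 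That yields the existence of some $\rho$ with (a); one then notes that, by the same positive-measure argument combined with Lemma 2.2's conclusion about $A$, such $\rho$ can be taken in $(\lambda, r_0) = (t(x)(\epsilon/3)^{1/(n+1)}, t(x))$.

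For part (b), when $x\in Z$, I would run the completely analogous argument \emph{intrinsically in $Z$}: apply the coarea inequality on the $n$-dimensional polyhedron $Z$ to the intrinsic distance-to-$x$ function, obtaining for a positive-measure set of radii $s$ that the intrinsic sphere $\partial_Z B_Z(x,s)$ is an $(n-1)$-subpolyhedron with small $(n-1)$-volume, and then bound $|\alpha|_n = |B_Z(x,\rho)|_n$ by the same kind of exchange. Here there is a subtlety: an upper bound on $|Z\cap B(x,\rho)|_n$ from (a) does \emph{not} immediately bound $|B_Z(x,\rho)|_n$, because the intrinsic ball in $Z$ is contained in $Z\cap B(x,\rho)$ only when $Z$ is ``convex enough'' — but the containment $B_Z(x,\rho)\subseteq Z\cap B(x,\rho)$ does hold, since an intrinsic path in $Z$ of length $<\rho$ is in particular an ambient path of length $<\rho$. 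Therefore $|\alpha|_n = |B_Z(x,\rho)|_n \leq |Z\cap\beta(\rho)|_n < \frac{2\rho^n}{n!}(1-\tfrac16\epsilon)$, and (b) follows from (a) with the \emph{same} $\rho$. So in fact (b) is essentially free once (a) is established, which is the economical route. The main obstacle I anticipate is making the surgery rigorous at the polyhedral level: the sphere $S_\rho$ is only a subpolyhedron for $\rho$ in a positive-measure set (Lemma 2.2 and Sard), one must check that $Z' = (Z\setminus\beta(\rho))\cup S_\rho$ can be taken to be a genuine compact subpolyhedron with smoothly embedded faces so that $|Z'|_n$ and the $r$-separating property are both legitimate (cf. the $1+f(\tau)$ convention in the text), and one must verify the covering-radius bookkeeping for components of $X^{n+1}\setminus Z'$ — in particular that removing $Z\cap\beta$ from $Z$ does not \emph{merge} two components of the complement into one of large radius, which is handled by the fact that $S_\rho$ still separates $\beta$ from its exterior within $X^{n+1}$. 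Tracking the dependence $\delta = \delta(n,\epsilon,\tau)$ through these choices is routine given the lower bound $\rho>\lambda$ derived above.
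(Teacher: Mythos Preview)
Your proposal is correct and follows essentially the same route as the paper: apply Lemma 2.2 to the ball $B(x,t(x))$ to get a sphere $S_\rho$ with $|S_\rho|_n < \frac{2\rho^n}{n!}(1-\frac{\epsilon}{3})$ for some $\rho\in(t(x)(\epsilon/3)^{1/(n+1)},t(x))$, assume for contradiction that (a) fails for all such $\rho$, perform the surgery $Z'=(Z\setminus\beta(\rho))\cup S_\rho$ to contradict $\delta$-minimality with $\delta=\frac{\epsilon}{3n!}\big(\tau(\epsilon/3)^{1/(n+1)}\big)^n$, and deduce (b) from (a) via the containment $B_Z(x,\rho)\subset Z\cap B(x,\rho)$. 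Your detours (the coarea-in-$Z$ plan in the first paragraph and the intrinsic-surgery idea for (b)) are unnecessary, as you yourself recognize, but the executed argument and the choice of $\delta$ match the paper exactly.
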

%%%%%%%%%%%%%%%%%

\begin{proof} The distances between two points of $Z$ endowed with the inner metric cannot be less
than the distance between these points in the metric of $X^{n+1}$. 
Therefore, assertion (b) of the lemma follows from assertion (a) simply because
each metric ball in $Z$ endowed with the inner metric is contained in the metric ball in $X^{n+1}$ with the same center
and the same radius. Thus, it is sufficient to prove (a).

We apply the previous lemma to $B$. There exists a metric sphere $S$ of radius
$s\in (t(x)({\epsilon\over 3})^{1\over n+1},t(x))$ that is a subpolyhedron and satisfies $|S|<(1-{1\over 3}\epsilon) {2s^n\over n!}$. Let $\lambda=\tau({\epsilon\over 3})^{1\over n+1}$, $\delta= {\epsilon\lambda^n\over 3n!}$. %{\epsilon^2\over 18}{\lambda^{n+1}\over (n+1)!}$. 
Observe that $${2s^n\over n!}(1-{1\over
6}\epsilon)-|S|> {2s^n\over n!}{\epsilon\over 6} %>{\lambda({\epsilon\over 3})^{1\over n+1}\over n!}{\epsilon\over 6}
>\delta.\ \ \ \ (*).$$
%{1\over 2}({s^n\over n!}-|S|).$

The proof is by contradiction. Let $Z$ be a $\delta$-minimal $r$-separating set.
We assume that for some $x$ and
each $\rho\in (t(x)({\epsilon\over 3})^{1\over n+1},t(x))$, the metric ball $\beta$ of radius $\rho$ centered at $x$ does not
satisfy the inequality in part (a) of the lemma. In particular, this is true for the metric ball $\beta$ of radius $s$ bounded by $S$: $|Z\bigcap\beta|\geq {2s^n\over n!}(1-{\epsilon\over 6})$,
We are going to modify $Z$ to obtain another $r$-separating set $Z'$ with 
volume less than $|Z|-\delta$ arriving to a contradiction.
%We apply the previous lemma to $B$. There exists a metric sphere $S$ of radius
%$s\in (0,t(x)]$ that is a subpolyhedron and satisfies $|S|< {s^n\over n!}$.
%Our assumption implies that for the metric ball $\beta$ bounded by $S$ %$|Z\bigcap\beta|\geq {s^n\over n!}(1-{\epsilon\over 6})$.
%start from choosing a sphere $S$ centered at $x$ as in the previous lemma. Using also Sard's theorem we can be sure that
%$S$ is a subpolyhedron of $X^{n+1}$. Denote its radius by $t$.
%So, $|S|\leq t^n,$ and $\beta$ is in the interior
%of the metric ball bounded by $S$.

To construct $Z'$ we remove from $Z$ all points inside the metric ball bounded by $S$, and take the union of the resulting
set $Z_1$ with $S$. It is obvious that $Z'$ is $r$-separating. Indeed, all components of $X^{n+1}\setminus Z$ outside of $S$
became smaller or unchanged when we replace $Z$ by $Z'$, and the ``new" component or components of $X^{n+1}\setminus Z'$ inside
of $S$ can clearly be covered by the metric ball of radius $r$ centered at $x$. 

It follows from formula (*) that $|Z'|< |Z|-\delta$.

%Now we are going to estimate $|Z_1|=|Z|-|Z\bigcap\beta|+|S|\leq |Z|-\epsilon_n(1-{1\over n+1})^nr^n+(n+1)\epsilon_{n+1}r^n= %|Z|-2^{-n-1}(n+1)^{-n}\sigma r^n< |Z|-\delta$, if $\delta<2^{-(n+1)}(n+1)^{-n}r^{-n}\sigma$.

\end{proof}

%\begin{corollary}
%Under the assumptions of the previous lemma the volume of each metric ball of radius $\rho=(1-{1\over n+1})r$ in the Riemannian subpolyhedron $Z$ does not exceed $\epsilon_n\rho^n$.
%\end{corollary}
%\begin{proof}
%As the extrinsic distance never exceed the intrinsic distance, any metric ball with respect to the intrinsic distance in $Z$
%is contained in the intersection of $Z$ with the concentric extrinsic metric ball in $X^{n+1}$ of the same radius. 
%In the situation of the corollary, Lemma 3.4 yields the desired upper
%bound for the volume of this intersection, and, therefore, for the volume of the metric ball in $Z$.
%\end{proof}

\begin{lemma}
Assume that $Y^n$ is a $d$-separating subpolyhedron in $X^{n+1}$ such that for some $d$ $UR_{n-1}(Y^n)\leq d$. Then $UR_n(X^{n+1})\leq d$.
\end{lemma}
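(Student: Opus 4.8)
The plan is to reduce the statement, via the second (covering) description of the Alexandrov width recalled in §1.3, to the following assertion: for every $\epsilon>0$ the space $X^{n+1}$ admits a finite cover by connected open sets of radius $<d+\epsilon$ and multiplicity $\le n+1$. Fix $\epsilon>0$. Since $UR_{n-1}(Y^n)\le d$, first choose a continuous map $g\colon Y^n\to K^{n-1}$ onto an $(n-1)$-dimensional polyhedron such that every point-preimage $g^{-1}(k)$ lies in a metric ball of $Y^n$ of radius $<d+\epsilon/2$.

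Next I would build a good cover of a neighbourhood of $Y^n$ inside $X^{n+1}$. For each $k\in K^{n-1}$ the set $g^{-1}(k)$ is compact and contained in a ball $B_{Y^n}(p_k,d+\epsilon/2)$; since $g$ is continuous and $Y^n$ is compact, there is an open neighbourhood $O_k\ni k$ with $g^{-1}(O_k)\subseteq\{y\colon \operatorname{dist}_{Y^n}(y,g^{-1}(k))<\epsilon/4\}\subseteq B_{Y^n}(p_k,d+3\epsilon/4)$. Refine $\{O_k\}$ to a finite cover $\{\Omega_\beta\}$ of $K^{n-1}$ by connected open sets of multiplicity $\le n$ (possible because $\dim K^{n-1}=n-1$). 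Now take a regular neighbourhood $N$ of $Y^n$ in $X^{n+1}$ — available in the polyhedral-with-smooth-faces setting — thin enough to carry a retraction $r\colon N\to Y^n$ with $\operatorname{dist}_{X^{n+1}}(x,r(x))<\epsilon/8$ for all $x\in N$, and let $\{\widehat V_\alpha\}$ be the collection of all connected components of all the open sets $(g\circ r)^{-1}(\Omega_\beta)$. These sets are connected, open, and cover $N$; they have multiplicity $\le n$, since a point lying in $n+1$ of them would be sent by $g\circ r$ into $n+1$ of the $\Omega_\beta$; and each has radius $<d+\epsilon$, because $r$ maps such a component into a single connected component of $g^{-1}(\Omega_\beta)\subseteq B_{Y^n}(p_k,d+3\epsilon/4)$, distances in $Y^n$ dominate distances in $X^{n+1}$, and $r$ moves points by less than $\epsilon/8$.

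Finally I would adjoin the connected components $\{Q_s\}$ of the open set $X^{n+1}\setminus Y^n$ (these are open, as $X^{n+1}$ is locally connected). They are pairwise disjoint, and since $Y^n$ is $d$-separating each $Q_s$ is contained in a metric ball of radius $\le d$. Then $\{\widehat V_\alpha\}\cup\{Q_s\}$ is a finite cover of $X^{n+1}$ by connected open sets of radius $<d+\epsilon$ whose multiplicity is $\le n+1$: any $n+2$ of its members would contain at most one $Q_s$ (the $Q_s$ being disjoint), hence at least $n+1$ of the $\widehat V_\alpha$, which is impossible. By the equivalence of §1.3 this yields $UR_n(X^{n+1})\le d+\epsilon$, and letting $\epsilon\to 0$ gives $UR_n(X^{n+1})\le d$.

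The step I expect to be the crux is the construction of the cover of the neighbourhood of $Y^n$ with controlled multiplicity: naively thickening a multiplicity-$n$ cover of $Y^n$ inside $X^{n+1}$ can create new $(n+1)$-fold overlaps, so it is essential to obtain these sets as preimages under $g\circ r$ of a multiplicity-$n$ cover of $K^{n-1}$, which forces the multiplicity bound automatically. The only other non-combinatorial input is the existence of a regular neighbourhood of the subpolyhedron $Y^n$ equipped with a retraction onto $Y^n$ moving points by an arbitrarily small amount, which is standard in the category of polyhedra with smoothly embedded faces used throughout the paper.
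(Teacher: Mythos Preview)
Your argument is correct and coincides with the second of the two proofs given in the paper (the covering argument attributed to Karasev): take a multiplicity-$n$ cover of $Y^n$ by small-radius sets, thicken it to a cover of a small neighbourhood of $Y^n$ in $X^{n+1}$ without raising the multiplicity, then adjoin the connected components of $X^{n+1}\setminus Y^n$. Your implementation of the thickening step---pulling back a multiplicity-$n$ cover of $K^{n-1}$ along $g\circ r$ for a regular-neighbourhood retraction $r$---is more explicit than the paper's one-line assertion that such a thickening exists, and your observation that this forces the multiplicity bound automatically is exactly the point. The paper also records a first proof via the map definition of $UR$: one extends $f:Y^n\to K^{n-1}$ over each closure $\bar U_\alpha$ into a separate cone $CK_\alpha$ (pushing interior points off the base), and glues all the cones along their common copy of $K$ to obtain an $n$-dimensional target with small point-preimages.
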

\begin{proof}
{\it First proof.} Let $\{U_\alpha\}_{\alpha\in A}$  be the set of all connected components of $X\setminus Y$. Observe that for each $\alpha$, $\partial U_\alpha\subset Y$. Consider the map $f$  of $Y$ to a $(n-1)$-dimensional
simplicial complex $K$ such that for each $x\in K$, $f^{-1}(x)$ can be covered by a metric ball $b(x)$ of radius $d+\delta$ in the intrinsic metric of $Y^n$, where $\delta$ is
arbitrarily small. Observe that the metric ball in $X^{n+1}$ with the same center
and radius will contain $b(x)$, and, therefore, $f^{-1}(x)$.
For each $\alpha$
take a copy $CK_\alpha$ of the cone $CK$ over $K$. Using the version of Tietze extension theorem for maps into contractible
simplicial complexes, we can extend the restriction of $f$ to $\partial U_\alpha$ to a continuous
map $g$ of the closure of $U_\alpha$ to $CK_\alpha$ (compare with a similar argument in
section 6.1 of [LLNR]). We would like to change this map so that the images of all points of $\partial U$ would remain unchanged,
and all points in $\bar U_\alpha\setminus \partial U_\alpha$ will be mapped to $CK_\alpha\setminus K_\alpha$ (that is, to the
interior of the cone). For this purpose endow each top-dimensional simplex in $CK_\alpha$ by the metric of
the Euclidean regular simplex with side length $d$. For each $x\in\bar U_\alpha$ $g(x)$ will be either the tip of the cone,
or a point on the unique generator of the cone passing through $x$ (which is a straight line segment with one end at $K_\alpha$ and another end at
the tip of the cone). If $g(x)$ is the tip of the cone, the new map $h(x)=g(x)$. Otherwise, let $\phi(x)$ denote the distance
from $g(x)$ to the tip of the cone. Now move $g(x)$ towards the tip of the cone by $\min\{\phi(x), dist(x,\partial U)\}$.
Now glue all copies of $CK_\alpha$ into one $n$-dimensional simplicial complex $L$ by identifying all copies of $K_\alpha$ at the boundaries into one
copy of $K$.

The resulting map $h:X\longrightarrow L$ is a continuous map. By construction, its restriction to $Y$ coincides with $f$,
and for each point $x\in Y$, $f^{-1}(f(x))$ is in $Y$. For each $x\in U_\alpha$, $f(x)\in CK_\alpha\setminus K$, and $f^{-1}(f(x))\in U_\alpha$. In both cases $f^{-1}(f(x))$ can be covered by a ball of radius $d+\delta$.

\par\noindent
{\it Second proof.} After reading the first version of this paper ([N]) Roman Karasev suggested the following very simple proof of
Lemma 2.5.
His proof uses the definition of $UR_n(X)$ as the infimum of $r$ such that there exists a cover of $X$
by open sets with radii $\leq r$ with multiplicity of the covering $\leq n+1$. Start with an
open covering of $Y^n$ of multiplicity $\leq n$ with radii of the sets in the intrinsic metric $\leq d$. 
Then we convert this covering of $Y^n$ into an open covering of a very small open neighbourhood of $Y^n$ in $X^{n+1}$ without
increasing the multiplicity and increasing the radii by not more than an arbitrarily small amount. Finally, we add all open sets $U_\alpha$ to the covering increasing its multiplicity by $1$.
\end{proof}

\begin{proposition}

Let $r,\epsilon, \tau<r$ be positive real numbers, and $X^n$ is a compact $n$-dimensional Riemannian polyhedron. Assume that for each $x\in X^n$
there exists $t(x)\in (\tau, r]$ such that the metric ball $B$ in $X^n$ of radius $t(x)$ centered at $x$ satisfies
$|B|< {2t(x)^n\over n!}(1-\epsilon)$.% where $\epsilon_n$ were defined at the beginning of this section.
Then $UR_{n-1}(X)<r$.%{3\over n+2}r$.
\end{proposition}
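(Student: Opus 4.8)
The plan is to prove the Proposition by induction on $n$, using the lemmas established above as the inductive machinery. The base case $n=1$ is exactly Lemma 2.1 (with $t(x)$ replaced by a value in $(\tau,r]$ and the volume bound $|B|<2t(x)(1-\epsilon)\leq 2t(x)$, which is a fortiori covered). For the inductive step, I would assume the Proposition holds in dimension $n$ and prove it for $X^{n+1}$ satisfying the hypothesis with parameters $r,\epsilon,\tau$. First I would fix a $\delta$-minimal $r$-separating subpolyhedron $Z=Z^n$ in $X^{n+1}$, where $\delta=\delta(n,\epsilon,\tau)$ is the constant furnished by Lemma 2.4; such a $Z$ exists because $I_X(r,n)$ is a well-defined infimum (for instance $X^{n+1}$ itself — or rather a generic level-set decomposition — gives a finite competitor, and in any case the infimum is finite). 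One should also arrange that $Z$ is a genuine subpolyhedron, which is where the earlier remark about smooth approximations of the distance function and Sard's theorem enters.

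The key point is that Lemma 2.4(b) tells us $Z$, endowed with its intrinsic metric, again satisfies the hypothesis of the Proposition but now in dimension $n$: for each $x\in Z$ there is $\rho=\rho(x)\in(t(x)(\epsilon/3)^{1/(n+1)},t(x))\subset(\tau(\epsilon/3)^{1/(n+1)},r]$ with the intrinsic ball $\alpha$ of radius $\rho$ about $x$ satisfying $|\alpha|_n<\tfrac{2\rho^n}{n!}(1-\tfrac{\epsilon}{6})$. This is precisely the hypothesis of the Proposition for $(Z^n,r,\tfrac{\epsilon}{6},\tau(\epsilon/3)^{1/(n+1)})$ — note that the new value of $\tau$ is still positive, which is the reason for carrying the parameter $\tau$ through the statements. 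Applying the inductive hypothesis to $Z^n$ gives $UR_{n-1}(Z^n)<r$.

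Now Lemma 2.5 (with $d$ any number in $(UR_{n-1}(Z^n),r)$, so that $Z$ is $d$-separating — here I would note that a $\delta$-minimal $r$-separating set is in particular $r$-separating, and since its complementary components can each be covered by a ball of radius $\leq r$, we can take $d<r$ as long as we only need $Z$ to be $r$-separating, which it is; more carefully one picks $d$ with $UR_{n-1}(Z^n)\leq d<r$ and observes $Z$ is $r$-separating hence the hypothesis of Lemma 2.5 with that $d$ holds since $d<r$... one must be slightly careful that $Z$ is $d$-separating for $d<r$, but the definition only requires covering by balls of radius $\leq d$; so instead apply Lemma 2.5 with $d=r$ reading ``$\leq$'' loosely, or simply observe that $Z$ is $d$-separating for $d$ slightly below $r$ is not guaranteed, so take $d$ to be $\max(UR_{n-1}(Z^n), \text{the actual separation radius of }Z)<r$) — yields $UR_n(X^{n+1})\leq d<r$, completing the induction. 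I expect the main obstacle to be the bookkeeping of constants and parameter ranges across the induction: one must verify that after passing to $Z$ the new data $(r,\epsilon/6,\tau')$ still lies in the admissible range for the inductive hypothesis, and that the strict inequalities ($<r$ rather than $\leq r$) are preserved at each step. A secondary technical point is ensuring the $\delta$-minimal $r$-separating set can be taken to be a subpolyhedron so that its $n$-dimensional volume and intrinsic metric make sense; this is handled by the smoothing-and-Sard discussion preceding the lemmas, and one should remark that the class of polyhedral $r$-separating sets has finite-volume members so the infimum $I_X(r,n)$ is finite.
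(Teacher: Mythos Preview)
Your proposal is correct and follows essentially the same approach as the paper: induction on $n$ with Lemma~2.1 as the base case, passing to a $\delta$-minimal $r$-separating subpolyhedron $Z$ via Lemma~2.4 (with the new parameters $\epsilon/6$ and $\tau(\epsilon/3)^{1/(n+1)}$), applying the inductive hypothesis to $Z$, and closing with Lemma~2.5. Your hand-wringing over the choice of $d$ in Lemma~2.5 is unnecessary: the paper simply takes $d=r$, since $Z$ is $r$-separating and $UR_{n-1}(Z)<r\le d$; the strict inequality in the conclusion is not belabored there either.
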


\begin{proof}
%Now the generalization of Theorem 2.1 for Riemannian polyhedra 
We are going to prove this proposition using the induction with respect 
to the dimension $n$. Lemma 2.1 is the base of induction.
To prove the induction step assume that the theorem is true for $n$. To prove it for $n+1$ choose a sufficiently small positive $\delta$ (as in Lemma 2.4) and consider a $\delta$-minimal $r$-separating $n$-dimensional subpolyhedron $Z$ of $X^{n+1}$.
Lemma 2.4 immediately implies that Riemannian subpolyhedron $Z$ satisfies the assumptions of the proposition the following values of the parameters $n,r,\epsilon,\tau,t(x)$:  We take $n$ and $r$ as in the text of the proposition, ${\epsilon\over 6}$ as a new value of $\epsilon$,  and $\tau({\epsilon\over 3})^{1\over n+1}$ as a new value of $\tau$. Finally, we take $\rho\in (\tau({\epsilon\over 3})^{1\over n+1}, r]$ provided by Lemma 2.4 as the value of $t(x)$. % and $\rho<t(x)$.

The induction assumption implies that $UR_{n-1}(Z)\leq \rho< r$ in the intrinsic metric on $Z$. %\leq (1-{1\over n+3})r{3\over n+2}={3\over n+3}r$.
The same inequality will automatically be true for the ``shorter" extrinsic metric.
Now the induction step follows from Lemma 2.5 applied for $Y^n=Z$, $d=r$.
\end{proof}

Now we are going to establish Theorem 1.1 for the class of all compact Riemannian polyhedra (and not only Riemannian manifolds):
\begin{theorem}
Let $M^n$ be a compact Riemannian polyhedron (for example, a closed Riemannian manifold), and $r>0$ a real number. Assume that for every $x\in M^n$ there exists $t=t(x)\in (0,r]$ such that
the volume of the metric ball of radius $t$ centered at $x$ is
less than ${2t^n\over n!}$.  Then $UR_{n-1}(M^n)< r.$
\end{theorem}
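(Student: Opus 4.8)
The plan is to deduce Theorem 1.8 from Proposition 2.6 by a limiting argument in two parameters. Proposition 2.6 gives the conclusion $UR_{n-1}(X^n)<r$ under the hypothesis that there is some $\tau>0$ with $t(x)\in(\tau,r]$ for every $x$, and that the ball of radius $t(x)$ has volume $<\frac{2t(x)^n}{n!}(1-\epsilon)$ for some fixed $\epsilon>0$. Theorem 1.8 has a weaker hypothesis: only $t(x)\in(0,r]$, and only the strict inequality $|B|<\frac{2t(x)^n}{n!}$ with no uniform $\epsilon$ and no uniform lower bound $\tau$. So the two things to repair are the missing $\tau$ and the missing $\epsilon$.

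First I would handle $\tau$. For each $x\in M^n$ the function $\rho\mapsto |B(x,\rho)|$ is monotone and continuous, and $|B(x,\rho)|/\rho^n\to 0$ as $\rho\to 0^+$ (since $M^n$ is a compact Riemannian polyhedron, hence locally of bounded geometry on each simplex; for a point on a $k$-simplex the ball volume is $O(\rho^k)$ with $k\ge 1$, and more carefully one checks $|B(x,\rho)|\le C\rho$ near any point so the ratio $|B(x,\rho)|/\rho^n$ with $n\ge 2$ tends to $0$, while for $n=1$ the statement is Lemma 2.1's setting). Consequently, if the strict inequality $|B(x,t(x))|<\frac{2t(x)^n}{n!}$ holds for some $t(x)\in(0,r]$, then by continuity it holds on a whole relatively open interval of radii, and in particular one can always choose a \emph{new} radius $t'(x)$ arbitrarily close to $t(x)$, or even decrease it; but to get a uniform $\tau$ one argues differently: fix a small $\epsilon_0>0$; for each $x$ either the original $t(x)$ already satisfies $|B(x,t(x))|<\frac{2t(x)^n}{n!}(1-\epsilon_0)$, or, using that for small radii the ratio $|B(x,\rho)|/\rho^n$ is as small as we like, there is a (possibly much smaller) radius $s(x)$ with $|B(x,s(x))|<\frac{2s(x)^n}{n!}(1-\epsilon_0)$. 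The worry is that these $s(x)$ are not bounded below. To fix this, one instead argues by contradiction and compactness: suppose $UR_{n-1}(M^n)\ge r$. Consider, for each $\tau>0$, the set $M_\tau=\{x: \exists\, t\in(\tau,r],\ |B(x,t)|<\frac{2t^n}{n!}\}$; these are open, nested, and cover $M^n$ as $\tau\to 0$, so by compactness $M^n=M_{\tau_0}$ for some $\tau_0>0$. Thus a uniform $\tau=\tau_0$ exists.

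Next, the $\epsilon$. With the uniform $\tau_0$ in hand, I still only have the strict inequality $|B(x,t(x))|<\frac{2t(x)^n}{n!}$. For each $x$, continuity of $|B(x,\rho)|$ in $\rho$ gives an $\epsilon_x>0$ and possibly a slightly adjusted $t(x)\in(\tau_0/2,r]$ with $|B(x,t(x))|<\frac{2t(x)^n}{n!}(1-\epsilon_x)$; by compactness (covering $M^n$ by the open sets where a common $\epsilon$ works, using continuity of ball volumes jointly in center and radius) one extracts a single $\epsilon_0>0$ valid for all $x$, with all radii lying in $(\tau_0/2,r]$. Now Proposition 2.6 applies with parameters $n$, $r$, $\epsilon_0$, $\tau_0/2$, and yields $UR_{n-1}(M^n)<r$, contradicting the assumption $UR_{n-1}(M^n)\ge r$. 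Hence $UR_{n-1}(M^n)<r$, which is the claim.

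The main obstacle is the joint-continuity/compactness bookkeeping that upgrades the pointwise, $\epsilon$-free, $\tau$-free hypothesis of Theorem 1.8 into the uniform hypothesis of Proposition 2.6 — in particular being careful that the map $(x,\rho)\mapsto |B(x,\rho)|$ is continuous (or at least that $\rho\mapsto |B(x,\rho)|$ is continuous and one can localize), so that strict inequalities are stable under small perturbations of both the center and the radius, and that the resulting radii stay in a compact subinterval of $(0,r]$. Everything else is the already-proved Proposition 2.6; in fact Theorem 1.8 is essentially just its ``$\tau\to 0$, $\epsilon\to 0$'' closure, and the proof is short once this continuity/compactness reduction is spelled out.
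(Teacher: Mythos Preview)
Your approach is correct and essentially the same as the paper's: both reduce to Proposition~2.6 by using compactness of $M^n$ and continuity of $(x,\rho)\mapsto|B(x,\rho)|$ to manufacture a uniform lower bound $\tau>0$ on the radii and a uniform $\epsilon>0$ in the volume inequality (the paper does this via lower semicontinuity of $x\mapsto\sup\{t\le r:|B(x,t)|/t^n<2/n!\}$ and continuity of $x\mapsto\min_{t\in[\tau,r]}|B(x,t)|/t^n$, while your nested-open-cover arguments for $M_\tau$ and for the $\epsilon$-level sets are an equivalent packaging). Two small cleanups: your aside claiming $|B(x,\rho)|/\rho^n\to 0$ and $|B(x,\rho)|\le C\rho$ is false at interior points of top-dimensional simplices (there the ratio tends to the Euclidean ball volume $\omega_n$) and should simply be deleted since you abandon it anyway, and the proof-by-contradiction wrapper is unnecessary because your open-cover argument already produces $\tau_0$ and $\epsilon_0$ directly, after which Proposition~2.6 gives $UR_{n-1}(M^n)<r$ outright.
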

\begin{proof}
%Now we would like to deduce Theorem 1.1 from Theorem 2.6 in the case when $X^n=M^n$ is a Riemannian manifold. 
We are going to deduce this theorem from Proposition 2.6 by proving that its assumption can be relaxed in two ways.

First, we would like to demonstrate that the assumption of existence of $\epsilon>0$ such that for each $x$ there exists $t\in [\tau,r]$ such that the ball $B=B(x,t)$ satisfies $\vert B\vert<{2t(x)^n\over n!}(1-\epsilon)$
is equivalent to the assumption that for each $x$ there exists $t\in [\tau,r]$ such that $\vert B\vert<{2t(x)^n\over n!}.$
Observe that ${|B(x,R)|\over R^n}$ is a continuous function of the center $x\in M^n$ of the ball $B(x,R)$ and its radius $R$.
Note that $\rho(x)=\min_{R\in [\tau,r]} {|B(x,R)|\over R^n}$ is a continuous function of $x$ such that its value at every point is strictly less than ${2\over n!}$. Hence, the maximum of $\rho(x)$ over all $x\in M^n$ will be attained at some point, and will be strictly less than ${2\over n!}$. Now one can choose $\epsilon$ as $0.5({2\over n!}-\max_{x\in M^n}\rho(x))$.

Second, we are going
to demonstrate that one does not need the condition that there exists $\tau>0$ such for each $x$  and some $t$ as in Proposition 2.6 $t\geq \tau$, 
as this condition automatically holds. 
For all $x\in M^n$ formally define $t(x)$ as $\sup_{t\in (0,r]}\{t\vert {|B(x,t)|\over t^n}< {2\over n!}\}$. The assumption of the theorem is that the set of $t$ is non-empty, and, therefore, $t(x)$ is defined 
 and positive for all $x$. Observe that $t(x)$ is lower-semicontinuous, i.e. $t(x)\leq \lim\inf_{y\longrightarrow x} t(y)$. Indeed, as ${|B(x,t)|\over t^n}$ is continuous, if ${|B(x,t)|\over t^n}<{2\over n!}$ for some $t$, then the same inequality will be true for all $y$ sufficiently
 close to $x$. Therefore, for each positive $\delta$ the inequality $t(y)\geq t(x)-\delta$ holds for all $y$ sufficiently close to $x$. This observation immediately implies the lower-semicontinuity of $t$. Hence, $t(x)$ attains its positive minimum on $M^n$ which can be chosen as $\tau$. Now the theorem follows from Proposition 2.6.

%Note that for each closed Riemannian manifold $M^n$ the volume of metric balls of with a very small radius $t$
%behaves as $\omega_nt^n$, where $\omega_n$ is the volume of the unit ball of radius $1$ in the Euclidean space. The error can be majorized
%in terms of the $C^1$-norm of the curvature tensor of $M^n$.
%As $\omega_n>{2\over n!}$, there exists $\tau>0$ such that for each $t\leq\tau$ and each $x\in M^n$ the volume of the metric ball of radius $t$
%centered at $x$ is greater than ${2t^n\over n!}$. Thus, $t(x)$ is uniformly bounded below by a positive constant $\tau$.
%
%First, we would like to demonstrate that one can replace the condition of existence of some $t$ such that $\vert B\vert<{2t(x)^n\over n!}(1-\delta)$ by $\vert B\vert<{2t(x)^n\over n!}.$
%Observe that ${|B(x,R)|\over R^n}$ is a continuous function of the center $x\in M^n$ of the ball $B(x,R)$ and its radius $R$.
%Note that $R(x)=\min_{R\in [\tau,r]} {|B(x,t)|\over t^n}$ is a continuous function of $x$ such that its value at every point is strictly less than ${2\over n!}$. Hence, the maximum of $f(x)$ over all $x\in M^n$ will be attained at some point, and will be strictly less than ${2\over n!}$. Now one can choose $\delta$ as $0.5({2\over n!}-\max_{x\in M^n}f(x))$.
%Theorem 1.1 immediately follows from Theorem 2.6.
\end{proof}
\vskip 0.3truecm

\par\noindent
{\bf Proof of the inequality (4) in section 1.2}: Finally, I am going to present an elementary proof of the inequality $sys_1(X^n)\leq 2UR_{n-1}(X^n)$ for essential polyhedral
length spaces $X^n$. I learned the idea of this proof from Roman Karasev. This proof is based on ideas from [S].

Recall that $UR_{n-1}(X^n)$
can be defined as the lower bound of $r$ such that there exists a covering of $X^n$ by connected open sets with radii $\leq r$ with multiplicity $\leq n$.

Assume that $sys_1(X^n)>2r$, where $r=UR_{n-1}(X^n)$. Choose a covering of $X^n$ of multiplicity $\leq n$ by connected open sets $U_\alpha$  with radii
$\leq r+\delta$, where $\delta<{1/3}(sys_1(M^n)-2r)$. Consider a closed curve $\gamma$
in some $U_\alpha$. %, or the union of two sets $U_\alpha$, $U_\beta$, or the union
%of three sets $U_\alpha$, $U_\beta$, $U_\gamma$. 
Let $p$ be the center of a ball of
radius $r+\delta$ covering $U_\alpha$. 
%Observe that the distance from $p$ to any point $\gamma(t)$ on $\gamma$ does not exceed $3r+3\delta$. 
We can homotope $\gamma$ into a concatenation of many thin triangles $p\gamma(t_i)\gamma(t_{i+1})$, where the length of the arc of $\gamma$ between
$\gamma(t_i)$ and $\gamma(t_{i+1})$ does not exceed $\delta$, and two other sides are minimizing geodesics.
The length of each of these triangles is less than $sys_1(X^n)$. Therefore, these
triangles are contractible, and so is $\gamma$. Thus, the inclusion homomorphisms $\pi_1(U_\alpha)\longrightarrow\pi_1(X^n)$
are trivial, and each $U_\alpha$ lifts to a collection of disjoint open sets $(\widetilde U_\alpha)_g\subset \tilde X^n$, where $g$ runs over $\pi_1(X^n)$, and $\widetilde X^n$ denotes the universal covering of $X^n$ endowed with the pullback metric.

Consider the nerves $N$ of the covering $\{U_\alpha\}$ of $X^n$, and $\widetilde N$ of the covering $\{(\widetilde U_\alpha)_g\}$ of $\widetilde X^n$. It is easy to see that there exists a commutative square with horizontal sides $\widetilde X^n\longrightarrow \widetilde N$ and $X^n\longrightarrow N$, where vertical sides $\widetilde X^n\longrightarrow X^n$, and $\widetilde N\longrightarrow N$ are the universal covering maps.
This easily implies that the map $X^n\longrightarrow N$ induces an injective homomorphism $\pi_1(X^n)\longrightarrow \pi_1(N)$.
%This immediately implies that
%the homomorphism of the fundamental groups induced by the map of $X^n$ to the nerve
%of the covering is injective. 
As this homomorphism is obviously surjective, it is an isomorphism. Thus, the classifying map $X^n\longrightarrow K(\pi_1(M^n),1)$
factors through the nerve $N$ that has dimension $\leq n-1$. (Recall that all maps
$X^n\longrightarrow K(\pi_1(X^n),1)$ that induce the isomorphism of fundamental groups are homotopic.)
Therefore, $X^n$ is not essential. Equivalently, if $X^n$ is essential, then $sys_1(X^n)\leq 2r$.

\section{ Hausdorff content}

Similar ideas can be applied to majorize $UR_{m-1}$ of a compact or  even a boundedly compact metric space $X$ in terms of the Hausdorff content, $\HC_m$,
of metric balls in $X$. The key is the coarea inequality proven in [LLNR]. (Note that in this section we are no longer assuming that $X$ is a Riemannian polyhedron or even a length space.) 

Recall that a metric space is called boundedly compact if all its closed and bounded subsets are compact. Given a boundedly compact metric space $X$, its bounded subset $A$, and a positive real $m$, one defines the $m$-dimensional Hausdorff content, $HC_m(A)$, of $A$ as $\inf \sum_i r^m$, where the infimum is taken over all
coverings of $A$ by closed metric balls $\beta_i$ of radii $r_i$ in $X$. If $A$ is empty, then it can be covered by the empty set of metric balls, and $\HC_m(A)=0$. 
%Note that we also can consider $A$ as a metric space with the metric coming from $X$. Metric balls of radius $r$ in $A$ are just intersections of $A$ with metric balls of radius $r$ in $X$ centered at a point of $A$. If one regards $A$ as a subset of itself regarded as the ambient metric space, one obtains
%{\it the intrinsic} Hausdorff content $HC_m(A)_i$. Clearly, $HC_m(A)_i\geq HC_m(A)$.

For convenience of the reader we present a version of the coarea inequality for Hausdorff content proven in [LLNR]:

\begin{lemma} ([LLNR])
Let $m, r_1, r_2$ be real numbers such that $0\leq r_1<r_2, m>0$, $X$ a metric space, $Y$ a subset of $X$, $x$ a point in $X$. For $j\in\{1, 2\}$ let $B_j$ be closed metric balls of radius $r_j$ centered at $x$, $A$ the annulus $B_2\setminus B_1$, and $S_s,\ s\in (r_1, r_2)$,
metric spheres of radius $s$ centered at $x$. Then there exists $s\in (r_1,r_2)$
such that $\HC_{m-1}(S_s\cap Y)\leq {2\over r_2-r_1}\HC_m(A\cap Y)$.
In particular, when $r_1=0$, we see that for each $r$ and metric ball $B$ of radius $r$ centered at $x$
there exists a positive $s<r$ such that
$\HC_{m-1}(S_s\cap Y)\leq {2\over r}\HC_m(B\cap Y)$.
\end{lemma}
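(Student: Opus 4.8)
The plan is the familiar ``slice an almost‑optimal covering by the distance function'' argument. Fix $\varepsilon>0$ and choose a countable covering $\{\beta_i\}_i$ of $A\cap Y$ by closed metric balls, with $\beta_i$ of radius $\rho_i$ and center $y_i$, such that $\sum_i\rho_i^m<\HC_m(A\cap Y)+\varepsilon$. The single elementary fact driving the proof is that $\beta_i$ can meet the sphere $S_s$ only for $s$ with $|s-d(x,y_i)|\le\rho_i$: if $z\in\beta_i\cap S_s$ then $|s-d(x,y_i)|=|d(x,z)-d(x,y_i)|\le d(z,y_i)\le\rho_i$. Hence $J_i:=\{\,s:\beta_i\cap S_s\neq\es\,\}$ lies in an interval of length $\le 2\rho_i$ (and is compact in the intended boundedly compact setting, being the continuous image of the compact ball $\beta_i$ under $z\mapsto d(x,z)$).

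Now I would compare spheres with the covering. For $s\in(r_1,r_2)$ we have $S_s\subset A$, so $S_s\cap Y$ is covered by precisely those $\beta_i$ with $s\in J_i$, whence $\HC_{m-1}(S_s\cap Y)\le f(s):=\sum_{i:\,s\in J_i}\rho_i^{m-1}$. Integrating over $(r_1,r_2)$ (Tonelli, all terms nonnegative) and using $|J_i\cap(r_1,r_2)|\le 2\rho_i$ gives
$$\int_{r_1}^{r_2}f(s)\,ds=\sum_i\rho_i^{m-1}\,|J_i\cap(r_1,r_2)|\le 2\sum_i\rho_i^m<2\bigl(\HC_m(A\cap Y)+\varepsilon\bigr).$$
Since $\int_{r_1}^{r_2}\HC_{m-1}(S_s\cap Y)\,ds\le\int_{r_1}^{r_2}f(s)\,ds$ and the left‑hand side does not depend on $\varepsilon$, letting $\varepsilon\downarrow 0$ yields the clean integrated estimate $\int_{r_1}^{r_2}\HC_{m-1}(S_s\cap Y)\,ds\le 2\,\HC_m(A\cap Y)$.

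The lemma then follows by averaging: if $\HC_{m-1}(S_s\cap Y)>\tfrac{2}{r_2-r_1}\HC_m(A\cap Y)$ held for every $s\in(r_1,r_2)$, integrating this strict inequality over the interval would contradict the integrated estimate, so some $s\in(r_1,r_2)$ must satisfy the asserted bound. I expect the only point requiring genuine care to be the measurability of $s\mapsto\HC_{m-1}(S_s\cap Y)$ needed to make this last averaging step rigorous in an arbitrary metric space; if one prefers to avoid it, one can argue with the explicit counting function $f$ of a single $\varepsilon$‑almost‑optimal covering and record the (for all later applications equally sufficient) conclusion $\HC_{m-1}(S_s\cap Y)<\tfrac{2}{r_2-r_1}\bigl(\HC_m(A\cap Y)+\varepsilon\bigr)$. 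Everything else is the one‑line triangle‑inequality estimate together with Fubini.
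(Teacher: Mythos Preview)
Your argument is correct and is essentially identical to the paper's: both pick an $\varepsilon$-almost-optimal covering, bound $\HC_{m-1}(S_s\cap Y)$ by the counting function $\sum_{i:\,\beta_i\cap S_s\neq\emptyset}\rho_i^{m-1}$, integrate using $\int\chi_{J_i}\le 2\rho_i$, and average. The paper sidesteps the measurability question exactly as you suggest in your final paragraph, working throughout with the explicit majorant $f$ (denoted $\HC_{m-1}^*$ there) rather than passing to the limit in $\varepsilon$ first.
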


\begin{proof}
For an arbitrarily small $\epsilon>0$ choose a covering of $A\cap Y$ by a countable collection of closed metric
balls $\beta_i\subset X$ with radii $r_i$ such that $\sum_i r_i^m\leq\HC_m(A\cap Y)+\epsilon$. Observe
that $\HC_{m-1}(S_s\cap Y)\leq \HC_{m-1}^*(S_s\cap Y)$ defined as $\sum_{i\in I_s}r_i^{m-1}$, where
$I_s$ is the set of indices $i$ such that $\beta_i\cap S_s\cap Y\not= \emptyset$. We are going to
prove that for some $s$ $\HC_{m-1}^*(S_s\cap Y)\leq {2\over r_2-r_1}\sum_i r_i^m$, which implies the lemma. For this purpose it is sufficient to prove that $\int_{r_1}^{r_2}\HC_{m-1}^*(S_s\cap Y)ds\leq 2\sum_ir_i^m.$
For each $i$ and $s$ define $\chi_i(s)$ as $1$, if $B_i\cap S_s\cap Y\not=\emptyset$, and $0$
otherwise. Using this notation $\int_{r_1}^{r_2}\HC_{m-1}^*(S_s\cap Y)ds=\int_{r_1}^{r_2}\sum_i 
r_i^{m-1}\chi_i(s)ds=\sum_ir_i^{m-1}\int_{r_1}^{r_2}\chi_i(s)ds\leq\sum_ir_i^{m-1}(2r_i)=2\sum_i r_i^m$.
\end{proof}

\noindent
{\bf Remark.} The constant $2$ in the right hand side of the coarea inequality is optimal. Indeed, let $m=1$, $r_1=0$, $r_2=1$, $X=Y=[0,1]$, $x=0$, and $A=(0,1]$.
$HC_1(A\cap Y)={1\over 2}$, as $A$ can be covered by a ball of radius $0.5$ centered at $0.5$, and $HC_0$ of the intersection of $Y$ with any geodesic sphere of radius in $(0,1)$ centered at $x=0$ is $1$.
%Let $X$ be a compact metric space. For each non-negative integer $n$ define Urysohn radius of $X$, $UR_n(X)$, as the infimum
%of $r$ such that there exists  continuous map $f:X\longrightarrow K^n$ from $X$ to a $n$-dimensional simplicial complex $K^n$ such that for each $s\in K^n$ $f^{-1}(s)$ can be covered by a metric ball of radius $r$. It is clear that $UW_n(X)\leq 2UR_n(X)$.
%%Also, if $X$ is $n$-essential, then $sys_1(X)\leq 6UR_n(X)$ (see section 8 for the details).

\begin{lemma}
Let $X$ be a metric space, $Y$ a subset of $X$.
Assume that for every metric ball $B$ of radius $r$ in $X$ $HC_1(B\cap Y)< {r\over 2}$.
Then $UR_0(Y)< r$.
\end{lemma}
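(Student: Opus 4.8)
The plan is to prove this by a connectedness argument on the subset $Y$, mimicking the one-dimensional case of Lemma 2.1 but now using the Hausdorff-content coarea inequality (Lemma 3.1) in place of the Riemannian coarea inequality. The key difference is that $Y$ need not be a length space, so ``$UR_0(Y)<r$'' must be read via the open-covering definition: I will show that $Y$ can be covered by connected open sets of radius $<r$, equivalently (after passing to a suitable equivalence relation) that certain ``chains'' of balls have small radius. Concretely, I would first reduce to the case where $X$ is replaced by $Y$ itself with the restricted metric, and observe that the hypothesis $HC_1(B(x,r)\cap Y)<r/2$ is inherited when one shrinks the ambient space; then it suffices to bound the ``radius'' of each connected component of $Y$.

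The main step is a separation estimate: I claim that for any point $x\in Y$ and any $t\le r$, if $HC_1(B(x,t)\cap Y)<t$ is violated one reaches a contradiction, but the right way to phrase it is that the annular spheres $S_s\cap Y$ for $s\in(0,t)$ cannot all be nonempty if the content is too small. Precisely, by Lemma 3.1 with $r_1=0$, $r_2=t$, there exists $s\in(0,t)$ with $HC_0(S_s\cap Y)\le \frac{2}{t}HC_1(B(x,t)\cap Y)<\frac{2}{t}\cdot\frac{t}{2}=1$; since $HC_0$ of a nonempty set is at least $1$ (a nonempty set needs at least one ball, of nonnegative radius, contributing $r^0=1$ — or one argues directly that $HC_0(\emptyset)=0$ and $HC_0$ of anything nonempty is $\geq 1$), this forces $S_s\cap Y=\emptyset$. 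Hence for every $x\in Y$ there is a radius $s(x)<r$ such that the metric sphere of radius $s(x)$ about $x$ misses $Y$ entirely; that is, $B(x,s(x))\cap Y$ is both open and closed in $Y$, hence is a union of connected components of $Y$, each contained in a ball of radius $s(x)<r$.

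From here the conclusion is immediate: every connected component $C$ of $Y$ is contained in $B(x,s(x))\cap Y$ for any $x\in C$, hence has radius $\le s(x)<r$; taking the cover of $Y$ by these sets $\{B(x,s(x))\cap Y : x\in Y\}$ — or, more cleanly, by choosing for each component one such ball — gives an open cover of $Y$ by (relatively) open sets each of radius $<r$ and, since these sets are unions of whole components, they can be refined to a cover of multiplicity $1$ by connected open sets of radius $<r$. Therefore $UR_0(Y)<r$.

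\textbf{Expected main obstacle.} The substantive content is entirely in the application of Lemma 3.1 to make a sphere genuinely empty; the only subtlety I anticipate is the bookkeeping around $HC_0$ and the strict-versus-nonstrict inequalities — one must be careful that $HC_0(\emptyset)=0$, that $HC_0$ of any nonempty set is $\ge 1$, and that the strict inequality $HC_1(B\cap Y)<r/2$ in the hypothesis is exactly what is needed to push $\frac{2}{t}HC_1(B(x,t)\cap Y)$ strictly below $1$ for $t$ up to $r$ (using monotonicity of $HC_1$ in the ball, so the bound at radius $t\le r$ follows from the bound at radius $r$, or by applying the hypothesis directly at radius $t$ if $t<r$ — here one should double-check whether the hypothesis is stated for balls of radius exactly $r$ or all radii $\le r$, and if only for radius $r$, use $B(x,t)\subseteq B(x,r)$). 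A second minor point is phrasing ``$UR_0(Y)<r$'' correctly when $Y$ is not connected and not a length space, which is handled by the open-cover definition as above.
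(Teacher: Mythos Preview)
Your proposal is correct and follows essentially the same route as the paper: apply Lemma~3.1 with $r_1=0$, $r_2=r$ to find $s<r$ with $HC_0(S_s\cap Y)<1$, conclude $S_s\cap Y=\emptyset$, and deduce that each connected component of $Y$ lies in a ball of radius $<r$. Two unnecessary detours to drop: the reduction ``replace $X$ by $Y$'' is neither needed nor safe (shrinking the ambient space can only \emph{increase} Hausdorff content, since there are fewer covering balls available), and you only ever need $t=r$, so the discussion of general $t\le r$ and the monotonicity worry in your obstacle section can simply be omitted---the paper applies the hypothesis once, at radius exactly $r$.
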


\begin{proof}
We are going to prove that each connected component $C$ of $Y$ is contained in the interior of a closed metric ball $B(C)=B(x,\rho)$ of radius
$\rho< r$ with a center $x=x(C)\in C$. (In fact, we will see that we can choose any point $x\in C$ as $x(C)$.)
If so, we can map $Y$ into the set of centers of balls $B(C)$ by sending all points of $C$ to the center $x(C)$ of $B(C)$.

Given $x\in C$ apply the coarea inequality (Lemma 3.1) to the closed ball $B(x,r)\subset X$ of radius
$r$ centered at $x$ (minus $x$) regarded as the annulus with radii $r_1=0$ and $r_2=r$. We are going to obtain a geodesic sphere $S$ 
centered at $x$ of a positive radius $\rho< r$ such that $HC_0(S\cap Y)\leq {2\over r}HC_1(B(x,r)\cap Y)<1.$
Note that $HC_0(S\cap Y)$ is just the minimal number of metric balls in $X$ required to
cover $S\cap Y$, and so it is either equal to $1$, if $S\cap Y$ is non-empty, or to $0$, if $S\cap Y$ is empty. Therefore, we conclude that $HC_0(S\cap Y)=0$, and $S\cap Y$ is empty.
Therefore, $B(x,\rho)\cap Y$ coincides with the intersection of $Y$ with the open metric ball of radius $\rho$ centered at $x$, and is the union of 
some collection of connected components of $Y$, one of which coincides with $C$. Now we can define  $B(C)$ as $B(x,\rho)$.
%a closed metric ball $B(x,s)$ of radius 
%$s=s(x)<\rho<r$ centered at $x$, which is a union of connected components of $X$. As $B(x,s)$ coincides with the open metric ball of radius $\rho$ centered at $x$,
%It is easy to see that for two different points $x_1,x_2\in X$ the corresponding balls $B(x_1,s(x_1))$ and $B(x_2,s(x_2))$
%either coincide or are disjoint.
\end{proof}
\begin{definition}
Given a subset $W$ of $X$, positive real $n$, and $\delta>0$, a collection
of metric balls $B_i$ in $X$ with radii $r_i$ is called a $(n,\delta)$-optimal covering of $W$, if they cover $W$, and $\sum_i r_i^n\leq HC_n(W)+\delta.$
\end{definition}

\begin{definition} Let $X$ be a metric space, $Y$ its compact subset.
A compact subset $Z$ of $Y$ is called $d$-separating for $Y$ if each connected
component of its complement $Y\setminus Z$ can be covered by a metric ball in $X$ of radius $\leq d$. Let $HC_n^{(b)}(Z)$
denote the infimum of $\sum_i r_i^n$ over all coverings of $Z$ by closed metric balls with radii $r_i\leq b$ in $X$. 
Denote the infimum of $HC_n^{(b)}(Z)$ over all $d$-separating sets $Z$ by $I_Y(d,b,n)$.
If $\delta>0$
is a positive real number, a $d$-separating set $Z$ is called $(b,n,\delta)$-minimal if $HC_n^{(b)}(Z)\leq I_Y(d,b,n)+\delta$.
\end{definition}

Using $HC_n^{(b)}(Y)$ instead of $HC_n(Y)$ here is another simple but beautiful idea of Papasoglu from [P] designed to
overcome non-additivity of Hausdorff content (and strongly reminiscent of ideas earlier used for the same purpose in [LLNR]).
%Define $\epsilon_1={1\over 3}$, and for each $n\geq 1$ 
%$\epsilon_{n+1}={\epsilon_n\over 2(n+3)}\exp(-2)<{\epsilon_n\over 2(n+3)}(1-{2\over n+3})^n$.
%Obviously, $\epsilon_{n+1} =2(2\exp(2))^{-n}{1\over (n+3)!},$ and it is not difficult to see that $\epsilon_{n+1}^{1\over n+1}<{1\over 3(n+3)}$.

\begin{lemma} Let $X$ be a metric space, $Y$ its compact subset, $r, \mu$ positive real numbers, $n\geq 1$ an integer.
Assume that for each closed metric ball $B$ of radius $r$ in $X$, $HC_{n+1}(B\cap Y)<({r\over 4(n+1)})^{n+1}-\mu$. Let
$\mu_1={r\over 4(n+1)}-(({r\over 4(n+1)})^n-4(n+1){\mu\over r})^{1\over n}$.
Assume that
$Z$ is a $({r\over 4(n+1)}-\mu_1,n,4(n+1){\mu\over r})$-minimal %$r(1-{1\over n+3)})$-
$r$-separating set for $Y$.
%for a sufficiently small positive $\delta$.%=\delta(n,r)$.
Then for each ball $\beta$ of radius $\rho=r(1-{1\over n+1})$ in $X$ 
$$HC_n(Z\bigcap\beta)< ({\rho\over 4n})^n.$$
\end{lemma}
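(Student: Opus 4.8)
This lemma is the Hausdorff-content analog of Lemma 2.4, so the plan follows the same template: use an (almost) minimal separating set and derive a contradiction from a hypersurface-surgery argument if the conclusion fails.

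\textbf{Setup and reduction.} Fix a ball $\beta = B(x,\rho)$ with $\rho = r(1-\frac{1}{n+1}) = r\frac{n}{n+1}$, and suppose toward a contradiction that $HC_n(Z\cap\beta) \geq (\frac{\rho}{4n})^n$. Note that $(\frac{\rho}{4n})^n = (\frac{r}{4(n+1)})^n$, so the assumed lower bound on $HC_n(Z\cap\beta)$ is exactly $(\frac{r}{4(n+1)})^n$; the point of choosing $\rho$ this way is precisely to make this numerology work out. First I would record the elementary inequality that the surgery parameter $b := \frac{r}{4(n+1)} - \mu_1$ satisfies $b^n = (\frac{r}{4(n+1)})^n - 4(n+1)\frac{\mu}{r}$ — this is just the definition of $\mu_1$ rearranged — so the ``budget'' $\delta := 4(n+1)\frac{\mu}{r}$ by which the separating set is allowed to be non-minimal equals the difference $(\frac{r}{4(n+1)})^n - b^n$.

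\textbf{The coarea step.} Apply the coarea inequality (Lemma 3.1) with $Y$ as given, the annulus being $B(x,r)\setminus B(x,\rho)$ (radii $r_1 = \rho$, $r_2 = r$, so $r_2 - r_1 = \frac{r}{n+1}$), to produce a metric sphere $S = S_s$ with $s \in (\rho, r)$ and
$$HC_n(S\cap Y) \leq \frac{2}{r_2 - r_1}\, HC_{n+1}\big((B(x,r)\setminus B(x,\rho))\cap Y\big) \leq \frac{2(n+1)}{r}\, HC_{n+1}(B(x,r)\cap Y) < \frac{2(n+1)}{r}\left(\Big(\frac{r}{4(n+1)}\Big)^{n+1} - \mu\right).$$
Expanding the right-hand side gives $2(n+1)\big(\frac{r^{n}}{(4(n+1))^{n+1}}\big) - \frac{2(n+1)\mu}{r} = \frac{1}{2}\big(\frac{r}{4(n+1)}\big)^n - \frac{1}{2}\delta$, where $\delta = 4(n+1)\frac{\mu}{r}$. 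So $HC_n(S\cap Y) < \frac{1}{2}b^n$ — in particular strictly less than $b^n$, which means $S\cap Y$ can be covered by balls of radii $\leq b$ with radius-sum-of-$n$-th-powers below $b^n$; in fact with room to spare by more than $\frac{1}{2}\delta$. (A small technical point I would be careful about: Lemma 3.1 as stated works with a fixed covering of the annulus; here I should cover $(B(x,r)\setminus B(x,\rho))\cap Y$ by balls all of radius $\leq b$, which is legitimate since $HC_{n+1}$ and $HC_{n+1}^{(b)}$ agree up to the arbitrarily small $\epsilon$ once we only care about the strict inequality, or one simply subdivides.)

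\textbf{The surgery.} Now modify $Z$: let $Z' = (Z\setminus B(x,s)) \cup (S\cap Y)$, or more precisely remove from $Z$ everything in the open ball bounded by $S$ and adjoin $S\cap Y$ (intersected with $Y$). As in Lemma 2.4, $Z'$ is still $r$-separating for $Y$: components of $Y\setminus Z$ lying outside $S$ only shrink, and the new component(s) inside $S$ are covered by the ball $B(x,s) \subset B(x,r)$ of radius $s < r$. For the content bookkeeping, take a $(b,n,\frac{1}{4}\delta)$-optimal covering of $Z$ realizing $HC_n^{(b)}(Z) \leq I_Y + \delta$, discard all balls meeting the open ball bounded by $S$ (they cover $Z\cap(\text{open ball})$, whose intersection with $\beta$-part we assumed has $HC_n \geq (\frac{r}{4(n+1)})^n = b^n + \delta$), and replace them by a near-optimal radius-$\leq b$ covering of $S\cap Y$ of cost $< b^n$ — actually $< \frac{1}{2}b^n$. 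The net change: we remove at least $b^n + \delta$ worth of $n$-content (this is where the assumption $HC_n(Z\cap\beta) \geq (\frac{\rho}{4n})^n$ is used, noting $\beta \subset B(x,s)$ since $\rho < s$) and add less than $b^n$, so $HC_n^{(b)}(Z') \leq HC_n^{(b)}(Z) - \delta < I_Y$, contradicting minimality.

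\textbf{Main obstacle.} The delicate point is the content accounting in the surgery: Hausdorff content is not additive, so ``removing the balls that meet the inside of $S$'' does not cleanly subtract a known quantity, and one must argue via the explicit near-optimal covering (this is exactly why Definition 3.5 introduces $HC_n^{(b)}$ and $(b,n,\delta)$-minimality — to make the replacement argument legitimate, following Papasoglu). I would need to check carefully that (i) the balls of the optimal covering of $Z$ that meet the closed ball $B(x,s)$ do cover $Z\cap\beta$ (true since $\beta \subset B(x,s)$), so their $n$-th-power-radius-sum is at least $HC_n(Z\cap\beta) \geq b^n + \delta$ up to the $\frac14\delta$ slack; (ii) the radii used for covering $S\cap Y$ are genuinely $\leq b$ (guaranteed since $HC_n(S\cap Y) < \frac12 b^n < b^n$, so we may take all covering balls of radius $< b$); and (iii) the resulting covering of $Z'$ still uses only radii $\leq b$, so it legitimately bounds $HC_n^{(b)}(Z')$ from above. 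Reconciling the several small slacks ($\epsilon$ from the coarea covering, $\frac14\delta$ from the optimal covering of $Z$, the $\frac12\delta$ surplus from the coarea bound) so that the final strict inequality $HC_n^{(b)}(Z') < I_Y(d,b,n)$ holds is routine but must be done with care.
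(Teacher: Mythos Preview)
Your overall strategy is correct and matches the paper's, but there is a genuine gap in the surgery accounting that your ``Main obstacle'' paragraph does not catch. When you discard from a near-optimal covering of $Z$ all balls that meet the open ball bounded by $S$, the remaining balls need not cover $Z_1$: a covering ball of radius $\leq b$ can contain a point of $Z_1$ lying just outside $S$ while also dipping into the open ball bounded by $S$, and such a ball gets discarded. So your candidate covering of $Z'$ may fail to be a covering at all, and the inequality $HC_n^{(b)}(Z') \leq HC_n^{(b)}(Z) - HC_n(Z\cap\beta) + HC_n^{(b)}(S\cap Y)$ does not follow. None of your checks (i)--(iii) addresses this, and with your choice of annulus the check in fact fails, since $s$ can land arbitrarily close to $\rho$.

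The paper repairs this with a buffer: it applies the coarea inequality on the \emph{narrower} annulus $\big(r(1-\tfrac{1}{2(n+1)}),\,r\big)$, so that the resulting sphere radius $s$ satisfies $s - \rho > \tfrac{r}{2(n+1)} > 2b$. It then partitions the covering balls of $Z$ according to whether they intersect $\beta$ (not whether they meet the open ball of radius $s$). Any ball of radius $\leq b$ touching $\beta$ lies inside $B' := B\big(x,\, r(1-\tfrac{1}{2(n+1)})\big) \subset B(x,s)$ and hence misses $Z_1$; therefore the balls \emph{not} touching $\beta$ already cover $Z_1$, while those touching $\beta$ cover $Z\cap\beta$ and so contribute at least $HC_n(Z\cap\beta)$. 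This yields the clean splitting $HC_n^{(b)}(Z) \geq HC_n^{(b)}(Z_1) + HC_n(Z\cap\beta)$ that the rest of the argument needs. Your wider annulus gives the sharper coarea bound $HC_n(S\cap Y) < \tfrac{1}{2}b^n$ instead of $< b^n$, but forfeits the buffer; the paper's choice deliberately trades that factor of $2$ for the geometric separation that makes the non-additive content bookkeeping legitimate.
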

%%%%%%%%%%%%%%%%%

\begin{proof}
The proof is by contradiction. We assume that there exists a ball $\beta$ or radius $\rho$ centered at a point $x$ that does not
satisfy the above inequality. We are going to modify $Z$ to obtain another $r$-separating set $Z'$ with a significantly lower
$HC_n^{({r\over 4(n+1)}-\mu_1)}$ than $Z$, obtaining a contradiction that proves the lemma. 

Using Lemma 3.1 one can choose a sphere $S$ centered at $x$
of radius in the interval $(r(1-{1\over 2(n+1)}), r)$
such that $HC_n(S\cap Y)< ({r\over 4(n+1)})^n-4(n+1){\mu\over r}=({r\over 4(n+1)}-\mu_1)^n.$
Therefore, 
%As for $n\geq 1$, $4(n+1)\epsilon_{n+1}={1\over (4(n+1))^n},$
$HC_n^{({r\over 4(n+1)}-\mu_1)}(S\cap Y)=HC_n(S\cap Y)$. 
%Also, the radius of $S$ does not
%exceed $r$, %(1-{1\over n+3})$, 
%and is not less than  $r(1-{1\over 2(n+1)})$.

To construct $Z'$ we remove from $Z$ all points inside the ball bounded by $S$, and take the union of the resulting
set $Z_1$ with $S\cap Y$. It is obvious that $Z'$ is $r$-separating.
%$r(1-{1\over n+3})$-separating.

Now we are going to estimate $HC_n^{({r\over 4(n+1)}-\mu_1)}(Z_1)$. First, note that none of the balls of radius $\leq {r\over 4(n+1)}-\mu_1$ in $X$
used to cover $Z_1$
in a nearly optimal way can intersect the closed ball $B'$ of radius $r(1-{1\over 2(n+1)})$ centered at $x$. On the other hand, every metric ball of radius $\leq {r\over 4(n+1)}-\mu_1$ that has a non-empty intersection
with $\beta$ is contained in $B'$. Therefore, $HC_n^{({r\over 4(n+1)}-\mu_1)}(Z)\geq HC_n^{({r\over 4(n+1)}-\mu_1)}(Z_1) + HC_n^{({r\over 4(n+1)}-\mu_1)}(Z\cap\beta)\geq HC_n^{({r\over 4(n+1)}-\mu_1)}(Z_1) + HC_n(Z\cap\beta)$.
Hence,
$HC_n^{({r\over 4(n+1)}-\mu_1)}(Z_1)\leq HC_n^{({r\over 4(n+1)}-\mu_1)}(Z)-HC_n(Z\cap\beta)$, and 
$HC_n^{({r\over 4(n+1)}-\mu_1)}(Z')\leq HC_n^{({r\over 4(n+1)}-\mu_1)}(Z)-HC_n(Z\cap\beta)+HC_n^{({r\over 4(n+1)}-\mu_1)}(S\cap Y)< HC_n^{({r\over 4(n+1)}-\mu_1)}(Z) -({r\over 4n})^n(1-{1\over n+1})^n+(({r\over 4(n+1)})^n-4(n+1){\mu\over r})= HC_n^{({r\over 4(n+1)}-\mu_1)}(Z)-4(n+1){\mu\over r}$.
%for a sufficiently small positive $\delta=\delta(n,r).$

\end{proof}
\begin{lemma} Let $X$ be a metric space, $Y$ a closed subset of $X$, $r$ a positive real number, $Z$ a $r$-separating set for $Y$. %Consider $Y$ and $Z$ as metric spaces with the induced metric.
Assume that $Z$ is compact, and $UR_{n-1}(Z)\leq r$. Then $UR_n(Y)\leq r$.
\end{lemma}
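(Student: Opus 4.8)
Let $X$ be a metric space, $Y$ a closed subset, $r>0$, and $Z$ an $r$-separating set for $Y$ that is compact with $UR_{n-1}(Z)\leq r$. Then $UR_n(Y)\leq r$.

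This is the Hausdorff-content analogue of Lemma 2.5, so the plan is to mimic one of the two proofs given there — and the second (Karasev-style) proof is by far the cleaner one to adapt, so I would follow that route. The plan is to work with the covering definition of Alexandrov width: $UR_n(Y)$ is the infimum of those $\rho$ for which $Y$ admits an open cover by sets of radius $\leq\rho$ with multiplicity $\leq n+1$. First I would fix $\epsilon>0$. Since $UR_{n-1}(Z)\leq r$ and $Z$ is compact, choose an open cover $\{V_\beta\}$ of $Z$ (open in $Z$) by sets of radius $\leq r+\epsilon/2$ with multiplicity $\leq n$. The first technical step is to thicken this to a cover of a neighbourhood of $Z$ in $Y$: using compactness of $Z$, replace each $V_\beta$ by a genuinely open subset $W_\beta\subseteq Y$ with $W_\beta\cap Z=V_\beta$, arranged (by shrinking and using a Lebesgue-number argument on the finite cover) so that $\bigcup_\beta W_\beta$ is an open neighbourhood $N$ of $Z$ in $Y$, the radii of the $W_\beta$ in $X$ exceed those of the $V_\beta$ by at most $\epsilon/2$, and the multiplicity is still $\leq n$. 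Here one must be a little careful that ``radius'' is measured with the ambient metric of $X$, but since the intrinsic metric on $Z$ dominates the restricted metric, radii can only decrease when passing to $X$, so this causes no trouble.

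The second step is to handle the complement. By the $r$-separating hypothesis, each connected component $U_\alpha$ of $Y\setminus Z$ has radius $\leq r$ in $X$. I would like to use the $U_\alpha$ directly as the remaining cover elements, but they are components of $Y\setminus Z$, not of $Y\setminus N$, so I instead take $U_\alpha' := U_\alpha\setminus Z = U_\alpha$ and note each $U_\alpha$ is open in $Y$ (components of the open set $Y\setminus Z$, and $Y\setminus Z$ is open since $Z$ is closed). The collection $\{W_\beta\}\cup\{U_\alpha\}$ covers $Y$: any point is either in $Z\subseteq N$ or in some $U_\alpha$. Each set has radius $\leq r+\epsilon$. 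The multiplicity bound is the crux: any point of $Y$ lies in at most $n$ of the $W_\beta$'s, and the $U_\alpha$'s are pairwise disjoint (distinct connected components), so a point lies in at most one $U_\alpha$; hence total multiplicity $\leq n+1$. This gives $UR_n(Y)\leq r+\epsilon$, and letting $\epsilon\to 0$ yields $UR_n(Y)\leq r$.

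The main obstacle I anticipate is the neighbourhood-thickening step — ensuring that the open cover of $Z$ can be extended to an open cover of an open neighbourhood of $Z$ in $Y$ with the multiplicity preserved and radii essentially unchanged. Compactness of $Z$ is what makes this go through: one reduces to a finite subcover, then for each index takes a small $X$-open thickening and intersects with $Y$; the multiplicity-$\leq n$ condition is a closed condition on finitely many sets, so it survives a sufficiently small thickening (at a point where $n+1$ of the original $V_\beta$ failed to meet, the closures of small enough thickenings still fail to meet, by compactness). A minor subtlety is that $\bigcup W_\beta$ might strictly contain a neighbourhood of $Z$ but also stick out; that is fine since we only need it to \emph{contain} a neighbourhood $N$ of $Z$ — we do not need equality — and every point not in $N$ is safely inside some $U_\alpha$. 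One should also remark, as in the second proof of Lemma 2.5, that if one prefers the map-to-polyhedron formulation, this covering argument is equivalent via passing to the nerve; but the covering formulation is the most economical way to write the proof, so that is what I would present.
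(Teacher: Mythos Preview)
Your proof is correct and follows exactly the paper's route: the paper likewise adapts the Karasev-style covering argument from Lemma~2.5, thickening a multiplicity-$\leq n$ open cover of $Z$ to a cover of a neighbourhood of $Z$ in $Y$ and then adjoining the connected components of $Y\setminus Z$ to raise the multiplicity by one. Your write-up is in fact more detailed on the thickening step than the paper's one-paragraph sketch; the aside about intrinsic versus restricted metric is unnecessary here, since in this section $Z$ simply carries the ambient metric of $X$.
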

\begin{proof}
%The only difference of this lemma from Lemma 2.5 is that we do not assume that $X$ is an $n$-dimensional Riemannian polyhedron, and $Y$ its
%subpolyhedron of codimension $1$.
%Yet 
Either of the two proofs of Lemma 2.5 can be used with only minor modifications to prove this lemma.
For example, if $UR_{n-1}(Z)\leq r$, then for an arbitrary positive $\delta$,
there exits a finite collection of open sets of radius $< r+\delta$ in $Z$ that covers $Z$ and has multiplicity $\leq n$.
It is easy to demonstrate that one can modify this collection so that the open sets in $Z$ become restrictions of open subsets of $X$ with radius $<r+2\delta$, and the multiplicity of the resulting collection of open sets in $X$ does not exceed $n$. Taking the intersections of these open sets with $Y$, we obtain a collection of open subsets of $Y$ of
multiplicity $\leq n$ and radius  $< r+2\delta$.
After adding all connected components of $Y\setminus Z$ to this collection we obtain a covering of $Y$ by open in $Y$ sets of radius $<r+2\delta$ such that the multiplicity of the resulting collection of open sets does not exceed $ n+1$.
\end{proof}

\begin{theorem}
Let $r$ be a positive number, $n$ a positive integer, $X$ a metric space, $Y$ a compact subset of $X$ such that for each metric ball $B$ of radius $r$,
$HC_n(B\cap Y)< ({r\over 4n})^n.$ %where $\epsilon_n$ were defined at the beginning of this section.
Then $UR_{n-1}(Y)< r.$ %{3\over n+2}r$.
\end{theorem}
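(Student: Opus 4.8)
The plan is to prove this by induction on $n$, exactly paralleling the structure of the proof of Proposition 2.6 but working with Hausdorff content of balls in $X$ intersected with $Y$ rather than with volumes in a Riemannian polyhedron. The base case $n=1$ is Lemma 3.3: the hypothesis $HC_1(B\cap Y)<\frac{r}{4}<\frac{r}{2}$ gives $UR_0(Y)<r$. For the inductive step, assume the theorem holds in dimension $n$ and suppose $Y$ satisfies $HC_{n+1}(B\cap Y)<(\frac{r}{4(n+1)})^{n+1}$ for every ball $B$ of radius $r$. I would first pass to a strict inequality with a margin: since $Y$ is compact and $B\mapsto HC_{n+1}(B\cap Y)$ is (upper semi-)controlled, there is $\mu>0$ with $HC_{n+1}(B\cap Y)<(\frac{r}{4(n+1)})^{n+1}-\mu$ for all such $B$. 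This is the analogue of the first relaxation step in the proof of Theorem 2.7, and I expect it to require a small compactness/semicontinuity argument rather than anything deep.

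Next I would invoke Lemma 3.6. Set $\mu_1=\frac{r}{4(n+1)}-\big((\frac{r}{4(n+1)})^n-4(n+1)\frac{\mu}{r}\big)^{1/n}$ and take $Z$ to be a $(\frac{r}{4(n+1)}-\mu_1,\,n,\,4(n+1)\frac{\mu}{r})$-minimal $r$-separating set for $Y$; such a minimal set exists because the relevant infimum $I_Y$ is finite (e.g. a suitable finite net yields an $r$-separating set of finite restricted content). Lemma 3.6 then says that with $\rho=r(1-\frac{1}{n+1})=\frac{nr}{n+1}$, every ball $\beta$ of radius $\rho$ in $X$ satisfies $HC_n(Z\cap\beta)<(\frac{\rho}{4n})^n$. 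Since $Z$ is a compact subset of $X$, this is precisely the hypothesis of the theorem in dimension $n$, applied to the pair $(X,Z)$ with radius $\rho$. Here I should be slightly careful: the inductive hypothesis is about balls of radius $\rho$ in the ambient space $X$, and $HC_n$ is computed via coverings by balls of $X$, so the statement of Lemma 3.6 matches the inductive hypothesis verbatim — no switch to an intrinsic metric on $Z$ is needed, which is a simplification over the Riemannian case.

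Applying the inductive hypothesis gives $UR_{n-1}(Z)<\rho<r$. Now $Z$ is a compact $r$-separating set for $Y$ with $UR_{n-1}(Z)\leq r$, so Lemma 3.7 yields $UR_n(Y)\leq r$. To upgrade this to the strict inequality $UR_n(Y)<r$ claimed in the theorem, I note that we actually obtained $UR_{n-1}(Z)<\rho$ with $\rho<r$ strictly, and tracing through Lemma 3.7 (whose proof produces, for every $\delta>0$, a covering of $Y$ of multiplicity $\leq n+1$ by sets of radius $<\rho+2\delta$) and choosing $\delta$ small enough that $\rho+2\delta<r$, we conclude $UR_n(Y)<r$. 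This completes the induction.

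The main obstacle I anticipate is the bookkeeping around the restricted Hausdorff content $HC_n^{(b)}$ and making sure the parameters $\mu$, $\mu_1$, and the minimal separating set are chosen consistently so that Lemma 3.6 applies and its output $(\frac{\rho}{4n})^n$ is genuinely the threshold needed for the inductive hypothesis — i.e. that the constant $4n$ propagates without loss, which is the whole point of Papasoglu's device and of the careful choice $\rho=\frac{n}{n+1}r$. The existence of a $(b,n,\delta)$-minimal $r$-separating set (finiteness of $I_Y$ and attainment up to $\delta$) also needs a one-line justification from compactness of $Y$. Everything else is a direct assembly of Lemmas 3.3, 3.6, and 3.7.
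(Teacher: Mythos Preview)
Your proposal is correct and follows essentially the same route as the paper's own proof: induction on $n$ with the $HC_1$ lemma as base case, a compactness argument to extract a uniform margin $\mu$, then the key lemma on almost-minimal $r$-separating sets to produce $Z$ satisfying the $n$-dimensional hypothesis at radius $\rho=\frac{n}{n+1}r$, and finally the width-lifting lemma to conclude. Your lemma numbering is shifted by one relative to the paper (the base-case lemma is 3.2, the key lemma is 3.5, and the width-lifting lemma is 3.6, since Definitions 3.3 and 3.4 share the counter), but the content matches exactly; your added remark on recovering the strict inequality via $\rho+2\delta<r$ is a nice touch the paper leaves implicit.
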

\begin{proof}
We are going to use the induction with respect $n$. Lemma 3.2 provides the base of induction.
To prove the induction step assume that the theorem is true for $n$. To prove it for $n+1$ observe that the assumptions of the theorem and the compactness of $Y$ imply the existence of a positive $\mu$ such that for each metric ball $B$ of radius $r$, $HC_{n+1}(B\cap Y)<({r\over 4(n+1)})^{n+1}-\mu$.
%choose a sufficiently small positive $\delta$ (as in Lemma 3.6) and consider a $({1\over 4(n+1)}r,n+1,\delta)$-minimal %$r(1-{1\over n+3})$
%$r$-separating set in $Y$. 
Define $\mu_1$ as in Lemma 3.5, and consider a $({r\over 4(n+1)}-\mu_1,n,4(n+1){\mu\over r})$-separating set $Z$.
Lemma 3.5 implies that $Z$ satisfies the conditions of the present theorem for $Y$ with parameters $n$ and $\rho=r(1-{1\over n+1})$ (instead of $r$).
The induction assumption implies that $UR_{n-1}(Z)< \rho<r$. %\leq (1-{1\over n+3})r{3\over n+2}={3\over n+3}r$.
Now the induction step follows from Lemma 3.6. % applied for $Y$, $Z$, and $d=r$.
%\begin{lemma}
%Assume that $Y$ is a $d$-separating set in $X$ such that for some $d$ $UR_{n-1}(Y)\leq d$. Then $UR_n(X)\leq d$.
%\end{lemma}
%begin{proof}
%Each connect component $U$ of $X\setminus Z$ has boundary in $Z$. By assumption, it can be mapped 
%\end{proof}
\end{proof}
\par\noindent
{\it Proof of Theorem 1.4.} The first part of Theorem 1.4 immediately follows from Theorem 3.7 applied for $Y=X$. 
To prove the second part it is sufficient
to take $r=({\HC_n(X)\over \epsilon_n})^{1\over n}+\delta$ for an arbitrarily small $\delta>0$, apply the first part, and pass to the limit, when $\delta\longrightarrow 0$. 

To prove the third part observe that the main difficulty in the non-compact case is that every $r$-separating set in $X$ might have
infinite Hausdorff content. In this case there will be no (almost) minimal $r$-separating sets, and (key) Lemma 3.5 cannot be applied.
Instead, one can use another trick from [P]: Let $\rho>0$
be a real number.
One chooses a point $x$ of a boundedly compact $X$ and covers $X$ by two overlapping sets of closed annuli centered at $x$. One family of annuli
involves radii in the intervals $[8(i-1)\rho, 8i\rho]$ for all positive integer $i$, another $[(8(i-1)+4)\rho, (8i+4)\rho]$. The idea is that the union
of almost minimal $\rho$-separating subsets for all these annuli will be a $\rho$-separating family for $X$. One can even remove the parts of  almost minimal
separating sets in all annuli that bound a domain only together with a non-empty subset of the boundary of the annulus. Indeed, all points in such domains with ``destroyed" boundary
will be $2\rho$-close to the boundary of the annulus, and, therefore, $2\rho$-close to the central sphere of an overlapping 
annulus $A$ in the second family. Therefore,
they will be $2\rho$-far from the boundary of $A$ and will be in domains in $A$ of radius $\leq \rho$ such that their closures do not intersect $\partial A$.

Now we can take each annulus (in one of the two collections) as $Y$ in Lemma 3.5, and apply Lemma 3.5. Remove the intersection of
the separating set $Z$ (as in Lemma 3.5) with the boundary of the annulus. This remaining part will also satisfy the conclusion of Lemma 3.5. Now take the union of the remaining parts of $Z$ over all annuli. As we observed, we will obtain an $r$-separating set
in the whole metric space $X$.
Each ball of radius $\rho$ (as in Lemma 3.5) will intersect almost minimal $r$-separating sets, $Z$, coming from at most two  annuli.
%Consider the remaining parts of almost minimal separating
%sets in annuli satisfy the assumptions and conclusion of Lemma 3.6 (separately for each annulus). As the result, 
Therefore, the conclusion of Lemma 3.5 will be almost true for 
{\it the union} $U$ of all these separating sets: One will only need the extra factor of $2$ in the right hand side of the inequality in the conclusion of Lemma 3.5. 
Thus, we obtain an analogue of Lemma 3.5 for non-compact boundedly compact metric spaces for $U$ instead of $Z$ with
the extra factor of $2$ in the right-hand side of the inequality.

Going through the rest of the proof, we see that
%We loose just a factor of $2$ in the conclusion of Lemma 3.6
%as we need to combine the union of two families of disjoint separating subsets.
this leads to the appearance of the extra factor of $2$ in the denominator of the right-hand side of the inequality in Theorem 1.4, part 3.

\par\noindent
{\bf Remark.} We do not see how to adapt this proof of Theorem 1.4(1) to prove its version, where the assumption that each metric ball $B$ of radius $r$ satisfies the inequality
$HC_n(B)< ({r\over 4n})^n$ is replaced by a weaker 
assumption that for each $r$ there exists $\rho\in (0,r)$ such that $HC_n(B)\leq c(n)\rho^n$, where one is allowed to 
choose any positive constant $c(n)$. The reason is that one uses $HC_n^{({r\over 4n+4}-\mu_1)}$ in the proof of Lemma 3.5, and it is not clear what is the correct replacement
of this quantity if $r$ is allowed to be variable. So, we do not know how to prove a Hausdorff content analog of Theorem 1.1 where the radii of ``small" balls can be variable.

{\bf Acknowledgements.} This work was partially supported by NSERC Discovery grant of the author. 

I would like to thank Roman Karasev who noticed several typos in the first version of the paper and suggested a simplification of the proof of Lemma 2.5, and also for communicating to me the inequality $sys_1(X^n)\leq 2UR_{n-1}(X^n)$ for essential polyhedral length spaces. I would like to thank Anton Petrunin who asked me if the constant in original version of Lemma 2.1 can be improved. This led me to a version with a better constant and helped to somewhat improve the constants in several theorems. 
%To prove Theorem 1.1 it is sufficient just to choose $t(x)=r$ in Theorem 2.6.
%Now observing that $\lim_{\sigma\longrightarrow 0}\epsilon_n={1\over n^n}$ we obtain Theorem 0.1 (and, therefore, Theorem 0.2)
%for Riemannian polyhedra, and in particular, closed Riemannian manifolds.


\begin{thebibliography}{}

\bibitem[BZ]{BZ} Burago, Yu. D., Zalgaller, V., Geometric inequalities, Springer, 1988

\bibitem[Ga]{Ga} Gaffney, M, ``The conservation property of the heat equation on Riemannian manifolds", Comm. Pure and Applied Math., 12(1959), 1-11.
%\bibitem[FF]{FF} Federer, H.; Fleming, W. H., Normal and integral currents. Ann. of Math. (2),
%72:458–520, 1960.
\bibitem[Gr]{GromovFilling} Gromov, M., Filling Riemannian Manifolds, J. Differential Geom., 18(1983), 1-147.
\par\noindent
%\bibitem[Gr]{Gro101} Gromov, M., 101 Questions, conjectures and problems around scalar curvature,
%https://www.ihes.fr/~gromov/wp-content/uploads/2018/08/101-problemsOct1-2017.pdf  .%
\bibitem[Gu10]{Gu11} Guth, L., ``Systolic inequalities and minimal hypersurfaces", Geom. Functional Analysis (GAFA), 19(2010), 1688-1692.

\bibitem[Gu11]{Gu1} Guth, L.,  Volumes of balls in large Riemannian manifolds. Ann. of Math. (2) 173 (2011), no. 1, 51-76. 
%\bibitem[Gu13]{Gu3} Guth, L., Contraction of areas vs. topology of mappings, Geom. Funct. Anal.,
\par\noindent
%23(6):1804-1902, 2013.
\bibitem[Gu17]{Guth_Urysohn} Guth, L.,   Volumes of balls in  Riemannian manifolds and Urysohn width, J. Top. Anal., 9(2)(2017), 195-219. 

\bibitem[K]{K} Katz, M., The filling radius of two-point homogeneous spaces, J. Differential Geometry, 18(1983), 505-510.
%\bibitem[LW]{LW} L.H. Loomis and H. Whitney. An inequality related to the isoperimetric inequality. Bull. Amer. Math. Soc., 55:961-962, 1949
%\bibitem[MS]{MS} Michael, J.; Simon, L., Sobolev and mean-value inequalities on generalized submanifolds of $R^n$, Comm. Pure Appl. Math 26 (1973), 361-379.
%\bibitem[W]{W} Wenger, S., A short proof of Gromov's filling inequality, Proc. Amer. Math. Soc., 136(8)(2008), 2937-2941.
%\bibitem[Y]{Y} Young, R., Quantitative nonorientability of embedded cycles, 
%Duke Math. J. 167(1) (2018), 41-108.
\bibitem[LLNR]{LLNR} Liokumovich, Y., Lishak, B., Nabutovsky, A., Rotman, R.,  ``Filling metric spaces", arXiv:1905.06522 v.1; version 3 will appear in Duke Math. J.
\par\noindent
\bibitem[N]{N} Nabutovsky, A., ``Linear bounds for constants in Gromov's systolic inequalities and related results", aXiv:1909.12225 v.1. 
\par\noindent
\bibitem[P]{P} Papasoglu, P., ``Uryson width and volume", GAFA 30(2020), 574-587.
\bibitem[S]{S} A.S. Schwarz, `` The genus of a fiber space", Amer. Math. Soc. Transl. 55 (1966), no. 2, 49–140.
\bibitem[W]{W} Wenger, S., A short proof of Gromov's filling inequality, Proc. Amer. Math. Soc., 136(8)(2008), 2937-2941.
\end{thebibliography}
\end{document}